\documentclass[12pt]{amsart}
\usepackage{amsmath,amsthm,amssymb}
\usepackage{tikz}
\usepackage{hyperref} 
\usetikzlibrary{arrows.meta}

\usepackage{a4wide}
\usepackage{enumerate}
\usepackage{multicol, multirow, tasks}

\newtheorem{theorem}{Theorem}
\newtheorem{proposition}[theorem]{Proposition}
\newtheorem{lemma}[theorem]{Lemma}

\theoremstyle{definition}

\newcommand{\g}[1]{\mathfrak{#1}}

\theoremstyle{remark}
\newtheorem{remark}[theorem]{Remark}

\DeclareMathOperator{\tr}{tr}

\DeclareMathOperator{\ad}{ad}

\begin{document}
\title[Minimal homogeneous submanifolds of $\mathbb{C} H^n$]{Minimal homogeneous submanifolds of \\ complex hyperbolic~spaces}
\author[A.~Cidre-D\'iaz]{\'Angel~Cidre-D\'iaz}
\address{CITMAga, 15782 Santiago de Compostela, Spain.\newline\indent Department of Mathematics, Universidade de Santiago de Compostela, Spain}
\email{angel.cidre.diaz@usc.es}
\author[M.~Dom\'inguez-V\'azquez]{Miguel~Dom\'inguez-V\'azquez}
\address{CITMAga, 15782 Santiago de Compostela, Spain.\newline\indent Department of Mathematics, Universidade de Santiago de Compostela, Spain}
\email{miguel.dominguez@usc.es}

\begin{abstract}
We classify minimal extrinsically homogeneous submanifolds of complex hyperbolic spaces. 
\end{abstract}

\thanks{The authors have been supported by grant PID2022-138988NB-I00 funded by MICIU/AEI/10.13039/501100011033 and by ERDF, EU, and by project ED431C 2023/31 (Xunta de Galicia, Spain). The first author acknowledges support of an FPU fellowship. 
} 
\subjclass[2020]{53C40, 53C35, 53C42}
\keywords{Minimal homogeneous submanifolds, complex hyperbolic space.}
\maketitle
\vspace{-1cm}

\section{Introduction}
A connected submanifold of a Riemannian manifold is called (extrinsically) homogeneous if it is an orbit of a Lie group isometric action, and minimal if its mean curvature vanishes.  

In this paper, we give the first classification of homogeneous minimal submanifolds in a complete family of symmetric spaces of nonconstant curvature, namely in complex hyperbolic spaces $\mathbb{C} H^n$. The following result is a consequence of this classification.
\begin{theorem}\label{th:short}
	Let $S$ be a homogeneous submanifold of a complex hyperbolic space $\mathbb{C} H^n$. Then $S$ is minimal if and only if it is totally geodesic or a minimal leaf of an isoparametric family of hypersurfaces of $\mathbb{C} H^n$.
\end{theorem}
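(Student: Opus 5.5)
The plan is to reduce the problem to orbits of connected subgroups of the solvable part of the Iwasawa decomposition, and then compute the mean curvature with Lie-algebraic formulas. Write $\mathbb{C} H^n = G/K$ with $G = SU(1,n)$, and write the Iwasawa decomposition $\g{g} = \g{k}\oplus\g{a}\oplus\g{n}$, where $\g{n} = \g{g}_\alpha\oplus\g{g}_{2\alpha}$, $\g{g}_\alpha\cong\mathbb{C}^{n-1}$ and $\dim\g{g}_{2\alpha}=1$.

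\textbf{Step 1: reduce to a canonical group.} By the structure theory of isometric actions on $\mathbb{C} H^n$ (Berndt--Tamaru, and Berndt--D\'iaz-Ramos for cohomogeneity one), a connected group $H$ acting isometrically either fixes a point, leaves a totally geodesic $\mathbb{C} H^k$ or $\mathbb{R} H^k$ invariant, or is contained in a parabolic subgroup. In the parabolic case one can replace $H$, without changing the orbit, by a group whose Lie algebra is of the form $\g{h} \subset \g{a}\oplus\g{n}$, possibly extended by a subalgebra of $\g{k}_0 = \g{u}(n-1)$.
\begin{itemize}
\item If $H$ fixes a point $o$, the orbit $H\cdot o'$ lies in a geodesic sphere centred at $o$. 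A compact minimal submanifold cannot exist in a Hadamard manifold: the convex function $d(o,\cdot)^2$ would have positive Laplacian along it. So this case gives nothing.
\item If $H$ leaves a totally geodesic proper subspace invariant, I would use the orthogonal projection onto that subspace together with a product-type decomposition of the mean curvature, and argue by induction on dimension.
\end{itemize}

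\textbf{Step 2: compute the mean curvature.} The main case is $\g{h} \subset \g{a}\oplus\g{n}\oplus\g{k}_0$. Pick the base point $o$ and identify $T_o\mathbb{C} H^n$ with $\g{a}\oplus\g{n}$, equipped with the solvable metric. Let $\g{s}$ be the projection of $\g{h}$ to $\g{a}\oplus\g{n}$. For the orbit through $o$, the mean curvature vector is
$$\mathcal{H} = \sum_i \bigl(\nabla_{X_i}X_i\bigr)^\perp ,$$
where $\{X_i\}$ is an orthonormal basis of $\g{s}$. The Levi-Civita connection is given by the Koszul formula on $\g{a}\oplus\g{n}$, with $\g{k}_0$-corrections. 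I would write $\g{s}$ by its projections:
\begin{itemize}
\item whether $\g{s}$ contains an element with nonzero $\g{a}$-component;
\item the subspace $\g{v} = \g{s}\cap\g{g}_\alpha$ and its Kähler angles;
\item whether $\g{g}_{2\alpha}\subset\g{s}$.
\end{itemize}
Then I would compute $\mathcal{H}$ explicitly. Since $\nabla_{X}X$ for $X\in\g{n}$ has a $\g{a}$-component proportional to $|X_\alpha|^2 + 2|X_{2\alpha}|^2$, minimality forces $\g{s}$ to contain a vector with nonzero $\g{a}$-component in a controlled way. It also forces $\g{a}$-directions to be balanced against the dimension of $\g{s}\cap\g{n}$, and forces the $\g{g}_\alpha$ part to have specific Kähler angles.

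\textbf{Step 3: compare with the isoparametric families.} I would match the resulting minimal orbits against the known minimal leaves of the isoparametric families of Díaz-Ramos--Domínguez-Vázquez: tubes around $W_\g{w}$ with a generic-angle subspace, and the leaves of the solvable foliations. Conversely, I would check that each totally geodesic subspace and each minimal isoparametric leaf is homogeneous and minimal. This direction is direct, since those leaves are orbits of the groups $S_\g{w}$ or of $H$ acting with cohomogeneity one.

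The main obstacle is Step 2. The orbit through a general point is not through $o$, so the argument needs a conjugation that moves the point to $o$ while keeping $\g{h}$ in normal form. In addition, the $\g{k}_0$-part of $\g{h}$ couples nontrivially with $\g{g}_\alpha$, so the induced orbit need not be an orbit of a subgroup of $AN$. I would first handle $\g{h}\subset\g{a}\oplus\g{n}$ completely. Then I would treat the $\g{k}_0$-twisting by showing that minimality kills the twisted part's contribution, using an averaging argument over the compact part $\exp(\g{h}\cap\g{k}_0)$.
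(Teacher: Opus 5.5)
\textbf{Verdict: genuine gap.} Your outline has the same architecture as the paper: reduce to the parabolic group $K_0AN$, compute the mean curvature at $o$, and compare with the classification of isoparametric families. But the step that carries essentially all of the difficulty is missing, and the tool you propose for it cannot work.

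\textbf{The diagonal subgroups of $K_0AN$.} In Step~1 you assert that, in the parabolic case, $H$ may be replaced without changing the orbit by a group with Lie algebra $\g{q}\oplus\g{h}'$, where $\g{q}\subset\g{k}_0$ and $\g{h}'\subset\g{a}\oplus\g{n}$. For an arbitrary subgroup of $K_0AN$ there is no reason for this. In general $\g{h}$ is a \emph{diagonal} subalgebra, made of vectors $X+\Phi(X)$ with $X\in\g{h}_{\g{a}\oplus\g{n}}$ and $\Phi(X)\in\g{k}_0$ not lying in $\g{h}$. These $\g{k}_0$-components enter $\mathrm{II}$ at $o$ through the terms $[T,U]$, $[\Phi(W_i),W_i]$, $[R,V]$ of Proposition~\ref{prop:Solonenko_an}.

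For instance, conjugating $\mathbb{R}(B+T)$ by $\exp(W)$, with $W\in\g{g}_\alpha$ and $[T,W]\neq 0$, gives $\mathbb{R}(B+T+U+cZ)$ with $U=\frac12W+[T,W]$ and $[T,U]\neq 0$. Then $\mathbb{R}T\oplus\mathbb{R}(B+U+cZ)$ is not a subalgebra, and Lemma~\ref{lemasandwich} cannot be applied. The paper obtains your replacement only a posteriori, for minimal orbits.

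Your proposed fix, averaging over $\exp(\g{h}\cap\g{k}_0)$, targets the wrong object. The algebra $\g{q}=\g{h}\cap\g{k}_0$ fixes $o$ and is invisible in $\mathrm{II}$ at $o$. Indeed, changing $X_{\g{k}}$ by $S\in\g{q}$ in Proposition~\ref{PropSolonenkoFormula} changes $[X_{\g{k}},Y]$ by $[S,Y]\in\g{h}_{\g{p}}$, which is tangent. The twisting components $T$, $R$, $\Phi(W_i)$ are not in $\g{h}$. So there is no compact subgroup of $H$ carrying them over which to average, and averaging over a group not contained in $H$ changes the orbit.

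\textbf{What is needed instead.} You need a way to turn the subalgebra condition into exact information on the twisted terms. The paper does this in three steps.
\begin{enumerate}
\item It writes $\g{h}=\g{q}\oplus\mathbb{R}X_1\oplus\mathbb{R}X_2\oplus\g{m}$ (Proposition~\ref{prop:decomposition}) and extracts bracket identities such as Lemma~\ref{lemma:case4_brackets}.
\item Crucially, Lemma~\ref{lemma:structure_formulas}: the map $P\mapsto[T,P]$, projected onto $\Phi(\g{s}_d)+\mathbb{R}(R-bT)$, is skew-adjoint and hence traceless. Its diagonal entries are computed from the subalgebra relations, which pins down $\sum_i\langle[\Phi(W_i),W_i],U\rangle$ exactly.
\item Substituting this into $\langle\mathcal{H},\xi\rangle$, with $\xi=|U|^2B-U-b(1+|U|^2)Z$, gives a strictly positive quantity unless $U=V=R=0$, $b=0$ and $\g{s}_d=0$.
\end{enumerate}
Only then is $\g{q}\oplus\mathbb{R}T\oplus\g{a}\oplus\g{m}\,(\oplus\g{g}_{2\alpha})$ a subalgebra, so that Lemma~\ref{lemasandwich} applies. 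Note that $T$ itself need not vanish, so minimality does not kill all twisting. The surviving twist is harmless only because it normalizes $\g{m}$.

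\textbf{Smaller points.}
\begin{itemize}
\item Minimality does not force particular K\"ahler angles. Every $\g{m}\subset\g{g}_\alpha$ gives a minimal orbit when $\g{g}_{2\alpha}\subset\g{h}$. In the other case, total reality of $\g{m}$ comes from $[W,W']=\langle JW,W'\rangle Z\in\g{h}$, not from minimality.
\item The invariant totally geodesic case is only gestured at. The paper uses Alekseevsky--Di Scala: outside the parabolic case $H$ has a totally geodesic orbit, and then every minimal $H$-orbit is totally geodesic. An induction would also require checking that minimal leaves inside a totally geodesic $\mathbb{C} H^k$ are minimal leaves of families in $\mathbb{C} H^n$.
\item Your conjugation worry is settled by Lemma~\ref{lemma:reduction_K0AN}. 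The subsequent analysis applies to arbitrary subalgebras of $\g{k}_0\oplus\g{a}\oplus\g{n}$, so no normal form needs to be preserved.
\end{itemize}
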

An isoparametric family of hypersurfaces is a decomposition of a Riemannian manifold into equidistant hypersurfaces of constant mean curvature and possibly one or two focal (a fortiori, minimal) submanifolds of codimension greater than one. Isoparametric families of hypersurfaces in $\mathbb{C} H^n$ are classified~\cite{DRDVSL:adv}. Except for the horosphere foliation of $\mathbb{C} H^n$, each isoparametric family of $\mathbb{C} H^n$ has exactly one minimal leaf, which is either focal or the so-called Lohnherr hypersurface (see Remark~\ref{rem:isop}). Such minimal leaves are homogeneous, even though most isoparametric families of $\mathbb{C} H^n$ are inhomogeneous.

The investigation of minimal homogeneous submanifolds dates back at least to the works of Hsiang \cite{Hsiang} and Takahashi~\cite{Takahashi}. In particular, every compact homogeneous space can be embedded as a minimal, extrinsically homogeneous submanifold of a round sphere $\mathbb{S}^n$ of sufficiently high dimension, showing that such submanifolds are abundant in spheres. Moreover, by results of Hsiang and Lawson~\cite{HsiangLawson}, compact homogeneous spaces also admit large families of minimal homogeneous submanifolds. However, no complete classification is known in this setting, even for spheres.

In nonpositive constant curvature, the situation is considerably more rigid: minimal homogeneous submanifolds of Euclidean and real hyperbolic spaces are precisely the totally geodesic ones, as shown by Di Scala and Olmos~\cite{DiScala}, \cite{discala-olmos}. Beyond the constant curvature setting, however, symmetric spaces of noncompact type admit a wide variety of minimal homogeneous submanifolds that are not totally geodesic (see for example \cite{DRDV:advances}, \cite{BS:cag}, \cite{Tamaru}). General results relating totally geodesic and minimal submanifolds in this context were obtained by Alekseevsky and Di Scala~\cite{ADS}. Nevertheless, despite this growing body of examples and partial results, a complete classification in the nonconstant curvature case remains out of reach.

The main goal of this paper is to initiate a systematic approach to this problem by establishing a complete classification in the first natural and fundamental case, namely that of complex hyperbolic spaces.
The resulting examples are described in terms of the root space and Iwasawa decompositions of the Lie algebra of the isometry group of $\mathbb{C} H^n$. To state the classification precisely, we briefly recall some known facts and fix notation; further details can be found in \S~\ref{section:preliminaries}.

It is well known that the connected component of the identity of the isometry group of $\mathbb{C} H^n$ is a finite quotient of $G=\mathrm{SU}(1,n)$. Let $\g{g}=\g{k}\oplus\g{a}\oplus\g{n}$   be the Iwasawa decomposition of the Lie algebra of $G$. Here $\g{k}$ is the Lie algebra of the isotropy group $K=\mathrm{S(U(1)\times U}(n))$ at some fixed base point $o\in \mathbb{C} H^n$, $\g{a}$ is a $1$-dimensional subspace of the orthogonal complement of $\g{k}$ in $\g{g}$ (with respect to the Killing form), and $\g{n}$ is a nilpotent subalgebra obtained as the sum of the positive root spaces of $\g{g}$ with respect to $\g{a}$. In this case, $\g{n}=\g{g}_\alpha\oplus\g{g}_{2\alpha}$, where $\alpha$ is a chosen simple root. It is also known that the metric and complex structure of $\mathbb{C} H^n$ make $\g{g}_\alpha$ isomorphic to the complex Euclidean space~$\mathbb{C}^{n-1}$. Below, whenever we refer to a  subspace $\g{m}$ of $\g{g}_\alpha \cong \mathbb{C}^{n-1}$, we mean a real vector subspace of $\g{g}_\alpha$, where $\g{g}_\alpha$ is regarded as a $(2n-2)$-dimensional real vector space.

\begin{theorem}\label{Thm1}
    A homogeneous submanifold of $\mathbb{C}H^n$ of positive dimension is minimal if and only if it is congruent to the orbit $H \cdot o$ through the base point $o \in \mathbb{C}H^n$ of a connected subgroup $H$ of $G$ with one of the following Lie algebras:
    \begin{enumerate}
        \renewcommand{\labelenumi}{(\alph{enumi})}
        \item $\g{h} = \g{a} \oplus \g{m}$, where $\g{m}$ is a totally real subspace of $\g{g}_\alpha \cong \mathbb{C}^{n-1}$.
        \item $\g{h} = \g{a} \oplus \g{m} \oplus \g{g}_{2\alpha}$, where $\g{m}$ is any subspace of $\g{g}_\alpha \cong \mathbb{C}^{n-1}$.
    \end{enumerate}
\end{theorem}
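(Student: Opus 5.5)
We prove the two implications separately. Throughout we identify $\mathbb{C}H^n$ with the solvable group $AN$ carrying the left-invariant metric induced by the base point $o$. Its Levi-Civita connection on left-invariant fields is
\[
\nabla_XY=\tfrac12\bigl([X,Y]-\ad_X^*Y-\ad_Y^*X\bigr).
\]
We fix a unit vector $B\in\g{a}$ with $[B,X]=\tfrac12X$ for $X\in\g{g}_\alpha$ and $[B,Z]=Z$ for $Z\in\g{g}_{2\alpha}$. With a suitable choice of unit $Z\in\g{g}_{2\alpha}$ we have $[X,Y]=\langle JX,Y\rangle Z$ for $X,Y\in\g{g}_\alpha$.

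\emph{Sufficiency.} First we check that $\g{h}$ in (a) and (b) is a subalgebra. In (b) this is clear, since $[\g{g}_\alpha,\g{g}_\alpha]\subset\g{g}_{2\alpha}$. In (a) it holds precisely because $\langle J\g{m},\g{m}\rangle=0$, which gives $[\g{m},\g{m}]=0$.

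For the subgroup $H=\exp\g{h}\subset AN$, the mean curvature vector of $H\cdot o$ at $o$ is $\sum_i(\nabla_{E_i}E_i)^\perp$, where $\{E_i\}$ is an orthonormal basis of $\g{h}$. Since $\nabla_{E_i}E_i=-\ad_{E_i}^*E_i$, a direct computation gives:
\begin{itemize}
\item $\nabla_BB=0$;
\item $\nabla_XX=\tfrac12|X|^2B$ for $X\in\g{g}_\alpha$, using $\langle[Y,X],X\rangle=0$ for $Y\in\g{g}_\alpha$ and $\langle[Z,X],X\rangle=0$;
\item $\nabla_ZZ=B$.
\end{itemize}
Every diagonal term therefore lies in $\g{a}\subset\g{h}$, so the normal component vanishes and $H\cdot o$ is minimal. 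Homogeneity makes this hold at every point.

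\emph{Necessity.} Let $S=H\cdot o$ be minimal with $H\subset G$ connected and closed. We split according to the standard trichotomy for isometric actions on Hadamard manifolds.

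First, suppose $H$ has a fixed point $p$. Then $S$ is a compact orbit inside a geodesic sphere around $p$. The function $d(p,\cdot)^2$ is strictly convex, so its restriction to a minimal compact submanifold has positive Laplacian, which contradicts the maximum principle. Hence $\dim S=0$.

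Second, suppose $H$ fixes no point of $\mathbb{C}H^n\cup\partial\mathbb{C}H^n$. Then $H$ is reductive with a noncompact semisimple part. By Karpelevich's theorem, $H$ has a totally geodesic orbit, and the results of Alekseevsky--Di Scala force the minimal orbit to be that totally geodesic one. The totally geodesic submanifolds of $\mathbb{C}H^n$ ($\mathbb{C}H^k$ and $\mathbb{R}H^k$) are congruent to orbits of $\g{a}\oplus\g{m}\oplus\g{g}_{2\alpha}$ with $\g{m}$ complex, and of $\g{a}\oplus\g{m}$ with $\g{m}$ totally real, respectively.

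Third, suppose $H$ fixes a point at infinity. Then we may conjugate so that $H\subset MAN$.
\begin{itemize}
\item Let $f$ be the Busemann function of that ideal point. Minimality gives $\Delta_Sf=\tr_S\operatorname{Hess}f\ge0$, and the kernel of $\operatorname{Hess}f$ is spanned by $\operatorname{grad}f=B$.
\item If the projection of $\g{h}$ to $\g{a}$ were zero, $f$ would be constant on $S$. Then $\tr_S\operatorname{Hess}f=0$ forces $TS\subset\mathbb{R}B$, which is absurd. So $\g{h}$ projects onto $\g{a}$.
\item Next we show that $S$ coincides with the orbit of a subgroup of $AN$ with Lie algebra $\g{a}\oplus\g{w}$, where $\g{w}\subset\g{n}$. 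The $\g{m}$-components of $\g{h}$ only twist the $\g{n}$-part, as in the Berndt--Díaz-Ramos normal forms. After conjugating by an element of $N$, we may assume $B\in\g{h}$.
\item Now write $\g{w}=\g{w}_\alpha\oplus(\g{w}\cap\g{g}_{2\alpha})$ plus possibly a ``tilted'' part of the form $\{X+cZ\}$. We compute the normal part of $\sum\nabla_{E_i}E_i$ and of the off-diagonal terms forced by the bracket. A tilted vector $X+\lambda Z$ contributes a normal term proportional to $\lambda X$-type components, which must vanish. Likewise, $[\g{w}_\alpha,\g{w}_\alpha]\neq0$ without $\g{g}_{2\alpha}\subset\g{h}$ contradicts closure under brackets.
\end{itemize}
This yields exactly (a) or (b).

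The main obstacle is the third case, in reducing a general $\g{h}\subset\g{m}\oplus\g{a}\oplus\g{n}$ to the normal form $\g{a}\oplus\g{w}$. Here the compact $\g{m}$-components and the possibly non-graded $\g{w}$ must be controlled. I would first try to show, via the Busemann argument above combined with a second-order (Simons-type) identity, that minimality forces $\g{h}$ to be invariant under $\ad B$. This would make $\g{w}$ graded and kill the tilted components, after which the mean curvature computation from the sufficiency part finishes the proof.
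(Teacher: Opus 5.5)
There is a genuine gap: the parabolic case, which is where essentially all of the paper's work lies, is not proved. Your sufficiency computation ($\nabla_XX=-\ad_X^*X$) is correct, and it is more self-contained than the paper, which cites the literature for (b). Your fixed-point and reductive cases match the paper's use of Alekseevsky--Di Scala. The Busemann argument correctly rules out $\g{h}_{\g{a}}=0$.

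\textbf{The reduction to $\g{a}\oplus\g{w}$ is unjustified.} You assert that $S$ is the orbit of a subgroup of $AN$ with Lie algebra $\g{a}\oplus\g{w}$, but no normal form achieves this independently of minimality. The $\g{k}_0$-components genuinely change the orbits. For instance, for $n\ge 2$ and $0\neq W\in\g{g}_\alpha$, the group $\exp(W)K_0\exp(-W)\subset K_0N$ has a compact orbit of positive dimension through $o$. No subgroup of $AN$ can have such an orbit.

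\textbf{The $N$-conjugation proves too much.} Conjugating by $N$ to get $B\in\g{h}$ moves the base point. Once $B\in\g{h}\subset\g{a}\oplus\g{n}$, the algebra is automatically $\ad(B)$-graded, so your ``tilted'' vectors cannot occur. Your own sufficiency computation then shows that the orbit through $o$ is minimal. So the argument as written would prove that every orbit of a subgroup of $AN$ with nonzero $\g{a}$-projection is minimal. That is false: for $0\neq U\in\g{g}_\alpha$, the orbit $\exp(t(B+U))\cdot o$ is a curve equidistant to a geodesic, with $\mathrm{II}(B+U,B+U)=\tfrac12(|U|^2B-U)\neq 0$. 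Minimality must be used precisely to control the position of $o$ relative to the minimal orbit, and in your sketch it never is.

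\textbf{What is missing.} The paper keeps the orbit through $o$ and writes
$\g{h}=\g{q}\oplus\mathbb{R}(B+U+T)\oplus\mathbb{R}(bB+V+Z+R)\oplus\{W+\Phi(W):W\in\g{s}\}$, together with the degenerate cases. It then pairs the mean curvature with explicit normal vectors: $B-bZ$, $|U|^2B-U$, or $\xi=|U|^2B-U-b(1+|U|^2)Z$. The obstacle is that the $\g{k}_0$-components contribute the terms $\sum_i\langle[\Phi(W_i),W_i],U\rangle$ and $\langle[R,V],U\rangle$, which have no a priori sign. The paper evaluates these exactly in two ways:
\begin{enumerate}
\item from closure of $\g{h}$ under brackets (Lemma~\ref{lemma:case4_brackets});
\item from the vanishing trace of the skew-adjoint map $P\mapsto[T,P]$ compressed to $\Phi(\g{s}_d)+\mathbb{R}(R-bT)$ (Lemma~\ref{lemma:structure_formulas}).
\end{enumerate}
This forces $\langle\mathcal{H},\xi\rangle>0$ unless $U=V=R=0$, $b=0$ and $\g{s}_d=0$. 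The sandwich Lemma~\ref{lemasandwich} then removes $T$ and $\g{q}$ without changing the orbit.

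Your stated target, $\ad(B)$-invariance of $\g{h}$, is essentially equivalent to this conclusion. However, the ``Simons-type identity'' meant to produce it is not supplied. The necessity direction therefore remains unproved exactly in the case that carries the content of the theorem.
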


This classification extends the partial characterization of minimal submanifolds of $\mathbb{C}H^n$ arising as orbits of subgroups of the solvable Iwasawa group $AN$ with Lie algebra $\g{a}\oplus\g{n}$,  obtained by Alekseevsky and Di Scala~\cite[Lemma 7.1]{ADS}. A main challenge in the general setting stems from the need to deal with diagonal subgroups of parabolic subgroups 
$K_0AN$ of $G=\mathrm{SU}(1,n)$, where the presence of a nontrivial compact component $K_0$ introduces considerably greater complexity. Accordingly, the bulk of the technical work in this paper is devoted to the study of such diagonal subgroups, leading to new tools that may be useful for the further study of homogeneous submanifolds in rank-one symmetric spaces.

Apart from the explicit classification  in Theorem~\ref{Thm1}, we also provide in Theorem~\ref{th:short} a characterization of the examples in terms of isoparametric hypersurfaces in $\mathbb{C} H^n$. This characterization is a consequence of the following result. 

\begin{theorem}\label{th:isop}
    A homogeneous submanifold $S\subsetneq\mathbb{C}H^n$  is minimal if and only if it is
    \begin{enumerate}
        \item a totally geodesic real hyperbolic subspace $\mathbb{R} H^k$, $k\in\{1,\dots, n-1\}$, or
        \item a Lohnherr hypersurface, or
        \item the focal submanifold of an isoparametric family of hypersurfaces of $\mathbb{C} H^n$.
    \end{enumerate}
\end{theorem}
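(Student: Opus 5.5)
The plan is to deduce Theorem~\ref{th:isop} from Theorem~\ref{Thm1} together with the classification of isoparametric families of hypersurfaces in $\mathbb{C}H^n$ from \cite{DRDVSL:adv}. By Theorem~\ref{Thm1}, a minimal homogeneous $S\subsetneq\mathbb{C}H^n$ is congruent to $H\cdot o$, with $\g{h}$ of type (a) or (b). I will go through both types and identify each orbit geometrically, and then verify the converse.

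\emph{Type (a).} Here $\g{h}=\g{a}\oplus\g{m}$ with $\g{m}$ totally real of dimension $k-1$. The subspace $\g{a}\oplus\g{m}$ is a Lie triple system: it corresponds to a totally real subspace of $T_o\mathbb{C}H^n$ via the standard identification $\g{a}\oplus\g{n}\cong T_o\mathbb{C}H^n$. Its orbit is the totally geodesic $\mathbb{R}H^k$, with $k=1+\dim\g{m}\le n$. The orbit of $AN$ with $\g{m}=\g{g}_\alpha$ is all of $\mathbb{C}H^n$, but a totally real $\g{m}$ has dimension at most $n-1$. So $k\le n$ here, and I need to be careful with $k=n$. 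The case $k=n$ is covered as the focal set of the family of tubes around $\mathbb{R}H^n$, which is isoparametric. Nothing is lost, because item (3) contains it. Hence $k\in\{1,\dots,n-1\}$ in item (1) suffices.

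\emph{Type (b).} Here $\g{h}=\g{a}\oplus\g{m}\oplus\g{g}_{2\alpha}$ and $\g{m}^\perp:=\g{g}_\alpha\ominus\g{m}$, and I split according to $\dim\g{m}^\perp$.
\begin{enumerate}
\item If $\g{m}^\perp=0$, then $H=AN$ acts transitively and the orbit is $\mathbb{C}H^n$ itself, which is excluded.
\item If $\dim\g{m}^\perp=1$, the orbit is the Lohnherr hypersurface (the minimal ruled hypersurface $W^{2n-1}$).
\item If $\dim\g{m}^\perp\ge 2$, the orbit is the Berndt--Br\"uck submanifold $W^{2n-k}_\varphi$, with $k=\dim\g{m}^\perp$ and $\varphi$ the K\"ahler angle data of $\g{m}^\perp$. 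By \cite{DRDVSL:adv}, its tubes are isoparametric (generally inhomogeneous), so it is the focal submanifold of an isoparametric family.
\end{enumerate}
I expect the main point to be this last matching with the description in \cite{DRDVSL:adv}. There the isoparametric families are exactly the tubes around $W^{2n-k}_{\g{w}}$ for arbitrary subspaces $\g{w}^\perp\subset\g{g}_\alpha$, including the totally geodesic $\mathbb{C}H^{n-1}$ and $\mathbb{R}H^n$ cases, together with the horosphere foliation and the Lohnherr-equidistant family. The focal leaf is the orbit of $\g{a}\oplus(\g{g}_\alpha\ominus\g{w}^\perp)\oplus\g{g}_{2\alpha}$, and it has codimension $\dim\g{w}^\perp\ge2$. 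One should also record that type (a) with $\g{m}=0$ gives a geodesic, which is $\mathbb{R}H^1$.

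For the converse, three checks are needed.
\begin{itemize}
\item Totally geodesic $\mathbb{R}H^k$ are minimal and homogeneous.
\item The Lohnherr hypersurface is minimal, as it is the type (b) orbit with $\dim\g{m}^\perp=1$.
\item Each focal submanifold is minimal, since focal submanifolds of isoparametric families are minimal. By the classification in \cite{DRDVSL:adv} it is of the form $W^{2n-k}_{\g{w}}$ (or a totally geodesic $\mathbb{C}H^{n-1}$, $\mathbb{R}H^n$, which are $W$'s or type (a) orbits), hence it is homogeneous.
\end{itemize}
The main obstacle is purely bookkeeping: making sure that the totally geodesic $\mathbb{C}H^k$ and $\mathbb{R}H^n$, which are excluded from item (1), are correctly absorbed as focal sets in item (3). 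For $\mathbb{C}H^k$ with $k<n$, this means checking that it equals $W$ for a complex $\g{w}^\perp$ of complex codimension $n-k$.
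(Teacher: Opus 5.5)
You follow the paper's route: apply Theorem~\ref{Thm1}, identify the type (a) orbits as totally geodesic $\mathbb{R}H^k$ with $k=1+\dim\g{m}\le n$, and absorb $k=n$ into item (3) via the tubes around $\mathbb{R}H^n$. You then split type (b) into the whole space, the Lohnherr hypersurface ($\dim\g{m}^\perp=1$), and focal sets of isoparametric tubes ($\dim\g{m}^\perp\ge 2$), and get the converse from minimality of focal submanifolds \cite{GT}.

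\textbf{The gap: the zero-dimensional case.} Theorem~\ref{Thm1} only covers homogeneous submanifolds of \emph{positive} dimension. So your opening step, that every minimal homogeneous $S\subsetneq\mathbb{C}H^n$ is congruent to an orbit of type (a) or (b), fails when $S$ is a single point. A point is homogeneous (for instance an orbit of $K$) and trivially minimal. It is not in item (1), where $k\ge 1$, nor in item (2). It is also none of the orbits $W_{\g{w}}$ you use, since their tangent spaces contain $B$ and $Z$. The fix is the paper's: tubes around a point, i.e.\ geodesic spheres, form an isoparametric family \cite[Main Theorem~(i)]{DRDVSL:adv}. The point is its focal submanifold and so falls under item (3).

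\textbf{Two minor remarks.} In the converse you need not show that the focal submanifolds are homogeneous, because homogeneity of $S$ is a hypothesis of the statement. Only minimality must be checked, and \cite{GT} supplies it. Your summary of the classification in \cite{DRDVSL:adv} also omits the family of geodesic spheres, which is exactly the case missing above. This does not affect the rest of the forward argument: there you only need that, for type (b) orbits with $\dim\g{m}^\perp\ge 2$, the tubes around $H\cdot o$ are isoparametric with focal set $H\cdot o$.
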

Let us observe that totally geodesic submanifolds of $\mathbb{C} H^n$ are either real or complex hyperbolic subspaces. It is well known that tubes around a totally geodesic complex hyperbolic subspace $\mathbb{C} H^k$, $k\in\{0,\dots, n-1\}$, or around a totally geodesic real hyperbolic subspace $\mathbb{R} H^n$ of maximal dimension, define a homogeneous isoparametric family. Thus, such totally geodesic submanifolds $\mathbb{C} H^k$ and $\mathbb{R} H^n$ are included in item (3) of Theorem~\ref{th:isop}. We also note that a Lohnherr hypersurface \cite{LR} is known to be the only minimal homogeneous hypersurface of $\mathbb{C} H^n$ up to congruence (as follows for example from~\cite{BDR:gd}). For an explicit description of the Lohnherr hypersurface and the possible focal submanifolds of isoparametric families (as classified in~\cite{DRDVSL:adv}), we refer the reader to Remark~\ref{rem:isop}. 

The paper is organized as follows. In Section~\ref{section:preliminaries}, we recall some basic facts about $\mathbb{C}H^n$, derive an explicit formula for the second fundamental form of the orbits of subgroups of a parabolic group, and prove the reduction of the classification problem to such subgroups. Then Section~\ref{sec:classification} is devoted to the proof of the classification result stated in Theorem~\ref{Thm1}, from where we will also deduce Theorems~\ref{th:short} and~\ref{th:isop}.

\section{Preliminary results}\label{section:preliminaries}

In this section we fix notation, collect basic facts, and review or derive certain results  used throughout the paper. In \S~\ref{subsec:CHn} we recall standard material on complex hyperbolic spaces, with emphasis on its solvable model and the root space decomposition of the isometry Lie algebra. In \S~\ref{subsec:II} we derive a formula for the second fundamental form of orbits of subgroups of a parabolic subgroup of the isometry group of $\mathbb{C} H^n$. Finally, in \S~\ref{subsec:reduction} we reduce the classification of minimal homogeneous submanifolds to the study of such orbits.

\subsection{The complex hyperbolic space}\label{subsec:CHn}
Here we describe the algebraic structure of $\mathbb{C} H^n$ as a symmetric space of noncompact type. For a detailed reference, see~\cite{DRDVSL:saopaulo}. 

We denote by $\mathbb{C} H^n$ the {complex hyperbolic space} of complex dimension $n$ (real dimension $2n$) and constant holomorphic sectional curvature $-1$. It is both a Hadamard  and a Kähler manifold; we denote its {complex structure} by $J$.
It is also a Riemannian symmetric space of noncompact type, which can be expressed as the homogeneous space $G/K$, where $G = \mathrm{SU}(1,n)$ and $K=\mathrm{S(U(1)\times U}(n))$. Let $\g{g} = \g{k} \oplus \g{p}$ be the {Cartan decomposition} of the Lie algebra $\g{g}$ of $G$ with respect to a fixed point $o \in \mathbb{C} H^n$, where $\g{k}$ is the Lie algebra of the isotropy group $K$ at $o$, and $\g{p}$ is the orthogonal complement of $\g{k}$ in $\g{g}$ with respect to the Killing form $\mathcal{B}$ of $\g{g}$; note that $\mathcal{B}$ is non-degenerate since $\g{g}$ is simple. Let $\theta$ denote the corresponding Cartan involution. The map $\phi_o\colon G\to \mathbb{C} H^n$, $\phi_o(g)=g\cdot o$, induces an isomorphism $(\phi_o)_{*e}\colon \g{p}\to T_o\mathbb{C} H^n$ under which the Riemannian metric of $T_o\mathbb{C} H^n$ pulls back to a positive multiple $\mathcal{Q}$ of $\mathcal{B}$ restricted to $\g{p}$.  We define a positive definite inner product $\mathcal{Q}_\theta$ on $\g{g}$ by $\mathcal{Q}_\theta( X, Y ) = -\mathcal{Q}(\theta X, Y)$ for any $X, Y \in \g{g}$. 

Let $\g{a}$ be a maximal abelian subspace of $\g{p}$. Since $\mathbb{C} H^n$ is a rank-one symmetric space, $\dim \g{a} = 1$. The endomorphisms in the family $\{ \ad(H): H \in \g{a} \}$ are self-adjoint with respect to $\mathcal{Q}_\theta$ and commute with each other. Consequently, they diagonalize simultaneously, giving rise to the {restricted root space decomposition} $\g{g} = \g{g}_{-2 \alpha} \oplus \g{g}_{-\alpha} \oplus \g{g}_0 \oplus \g{g}_{\alpha} \oplus \g{g}_{2 \alpha}$. Here, the {restricted root spaces} are defined as $\g{g}_\lambda = \{ X \in \g{g} : \ad(H) X = \lambda(H) X, \, \text{for all } H \in \g{a}\}$, for $\lambda \in \g{a}^*$. Note that $\theta\g{g}_\lambda=\g{g}_{-\lambda}$ and $[\g{g}_\lambda,\g{g}_\mu]\subset\g{g}_{\lambda+\mu}$ for any $\lambda,\mu\in\g{a}^*$. Moreover, the root space $\g{g}_{0}$ decomposes as the $\mathcal{Q}_\theta$-orthogonal sum $\g{g}_0= \g{k}_{0} \oplus \g{a}$, where $\g{k}_{0} \cong \g{u}(n-1)$ is the centralizer of $\g{a}$ in $\g{k}$. Then, $\g{k}_0$ normalizes $\g{g}_\alpha$ and centralizes $\g{g}_{2\alpha}$.

The {Iwasawa decomposition theorem} states that $\g{g} = \g{k} \oplus \g{a} \oplus \g{n}$ is a vector space direct sum, where $\g{n} = \g{g}_{\alpha} \oplus \g{g}_{2 \alpha}$. The subspace $\g{n}$ is a $2$-step nilpotent Lie subalgebra of $\g{g}$ with center $\g{g}_{2\alpha}\cong \mathbb{R}$, and $\g{a}$ normalizes  $\g{g}_\alpha$ and $\g{g}_{2\alpha}$. Let $AN$ denote the connected solvable Lie subgroup of $G$ with Lie algebra $\g{a} \oplus \g{n}$. The Iwasawa decomposition at the Lie group level states that $G=KAN$. Hence, the group $AN$ acts simply transitively on $\mathbb{C} H^n$. It turns out that $\mathbb{C} H^n$ is isometric to the Lie group $AN$ equipped with a suitable left-invariant metric $\langle \cdot, \cdot \rangle$.
Under this identification, the complex structure $J$ becomes a left-invariant orthogonal, skew-adjoint endomorphism of $\g{a}\oplus \g{n}$ satisfying $J \g{a} = \g{g}_{2\alpha}$ and leaving $\g{g}_\alpha$ invariant.

Let $B \in \g{a}$ and $Z \in \g{g}_{2\alpha}$ be unit vectors with respect to the metric $\langle \cdot, \cdot \rangle$ such that $JB = Z$. Then $\g{a} = \mathbb{R} B$ and $\g{g}_{2\alpha} = \mathbb{R} Z$.
It is crucial to distinguish between the inner product $\mathcal{Q}_\theta$ on $\g{g}$ derived from the Killing form and the inner product $\langle \cdot, \cdot \rangle$ on $\g{a}\oplus\g{n}$ that determines the above-mentioned left-invariant metric on $AN$. The relation is given by:
\begin{equation}\label{eq::rel_formaKilling}
    \mathcal{Q}_\theta(B,B)=\langle B, B \rangle \quad \text{and} \quad \mathcal{Q}_\theta(X,Y)=2 \langle X, Y \rangle, \quad \text{for any } X, Y \in \g{n}.
\end{equation}
Moreover, we have the following bracket relations for the Lie algebra $\g{a} \oplus \g{n}$:
\begin{equation}\label{eq:brackets}
\begin{aligned}
[B,U]=\frac{1}{2}U, \quad [B,Z]=Z, \quad [Z,U]=0, \quad [U,V]=\langle JU,V \rangle Z,
\end{aligned}
\end{equation}
for any $U, V \in \g{g}_\alpha$. Note that we can identify $\g{g}_\alpha \cong \mathbb{C}^{n-1}$ via the complex structure $J|_{\g{g}_\alpha}$.

Next, we gather some bracket relations involving the Cartan involution $\theta$. 

\settasks{
	column-sep=-2.2em
}

\begin{lemma}\label{lemma_id_brackets_ttheta}
    Let $U, V \in \g{g}_{\alpha}$ and $Z \in \g{g}_{2\alpha}$. The following identities hold:
    \begin{tasks}(3)
        \task $[\theta U,B] = \frac{1}{2}\theta U$.
                        \task $[\theta Z,B] = \theta Z$.
        \task $[\theta U, V] = \langle U,V \rangle B \pmod{\g{k}_0}$.

                     \task $[\theta U, Z] = -JU$.
        \task $[\theta Z,  Z] = 2B$.
    \end{tasks}
\end{lemma}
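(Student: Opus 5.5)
All five identities follow from two facts: $\theta$ is a Lie algebra automorphism with $\theta B=-B$ (since $B\in\g{a}\subset\g{p}$), and $\ad$ is skew for the Killing form. The plan is to first locate each bracket in the correct root space, then identify it by pairing with $\mathcal{Q}_\theta$ against a basis of that space, using \eqref{eq::rel_formaKilling} and \eqref{eq:brackets}.

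The key tool is the adjointness formula
$$\mathcal{Q}_\theta([X,Y],W)=-\mathcal{Q}_\theta(Y,[\theta X,W]),\qquad X,Y,W\in\g{g}.$$
It follows from $\mathcal{Q}_\theta([X,Y],W)=-\mathcal{Q}([\theta X,\theta Y],W)=\mathcal{Q}(\theta Y,[\theta X,W])$, which uses $\theta$-invariance and $\ad$-invariance of $\mathcal{Q}$.

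For (a), I apply $\theta$ to $[U,B]=-\tfrac12U$ and use $\theta B=-B$. This gives $[\theta U,B]=-\theta[U,B]=\tfrac12\theta U$. Part (b) is identical, starting from $[Z,B]=-Z$.

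For (c), note that $[\theta U,V]\in\g{g}_{-\alpha+\alpha}=\g{g}_0=\g{k}_0\oplus\g{a}$, and this sum is $\mathcal{Q}_\theta$-orthogonal. So only the $\g{a}$-component needs computing, and it is $\mathcal{Q}_\theta([\theta U,V],B)\,B$ because $\mathcal{Q}_\theta(B,B)=1$. By the adjointness formula with $\theta\theta U=U$,
$$\mathcal{Q}_\theta([\theta U,V],B)=-\mathcal{Q}_\theta(V,[U,B])=\tfrac12\mathcal{Q}_\theta(V,U)=\langle U,V\rangle,$$
where the last step uses \eqref{eq::rel_formaKilling}.

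For (d), $[\theta U,Z]\in\g{g}_\alpha$. For any $W\in\g{g}_\alpha$, the adjointness formula and \eqref{eq:brackets} give
$$\mathcal{Q}_\theta([\theta U,Z],W)=-\mathcal{Q}_\theta(Z,[U,W])=-\langle JU,W\rangle\,\mathcal{Q}_\theta(Z,Z)=-2\langle JU,W\rangle.$$
This equals $\mathcal{Q}_\theta(-JU,W)$, and nondegeneracy of $\mathcal{Q}_\theta$ on $\g{g}_\alpha$ yields $[\theta Z\text{-free form}]\,[\theta U,Z]=-JU$.

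For (e), $[\theta Z,Z]\in\g{g}_0$. For $T\in\g{k}_0$ we get $\mathcal{Q}_\theta([\theta Z,Z],T)=-\mathcal{Q}_\theta(Z,[Z,T])=0$, because $\g{k}_0$ centralizes $\g{g}_{2\alpha}$. For the $\g{a}$-component, $\mathcal{Q}_\theta([\theta Z,Z],B)=-\mathcal{Q}_\theta(Z,[Z,B])=\mathcal{Q}_\theta(Z,Z)=2$, which gives $2B$.

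The only delicate point is bookkeeping of signs and normalizations: the sign in the adjointness formula, $\theta B=-B$, and the factor $2$ in \eqref{eq::rel_formaKilling} on $\g{n}$ versus $1$ on $\g{a}$. I would verify the adjointness formula carefully once and then reuse it in (c), (d) and (e).
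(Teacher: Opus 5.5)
Your proof is correct and is essentially the paper's own argument: (a) and (b) follow from $\theta$ being an automorphism with $\theta B=-B$, and (c)--(e) follow by locating each bracket in the appropriate root space and pairing it with $B$, with $\g{g}_\alpha$, or with $\g{k}_0$, using the identity $\mathcal{Q}_\theta([X,Y],W)=-\mathcal{Q}_\theta(Y,[\theta X,W])$ and the normalizations in \eqref{eq::rel_formaKilling}. The only blemish is the stray fragment just before $[\theta U,Z]=-JU$ at the end of (d), which should simply be deleted.
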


\begin{proof}
    (a) Using that $\theta$ is an involutive Lie algebra automorphism, $\theta B = -B$ and \eqref{eq:brackets}, we have:
    \[
    [\theta U,B] = \theta [U,\theta B] =  -\theta [U,B] = -\theta \Bigl(-\frac{1}{2}U\Bigr) = \frac{1}{2}\theta U.
    \]

    (b) Analogous to (a).

    (c) By the properties of root spaces, we know that $[\theta U, V] \in \g{g}_{0} = \g{a} \oplus \g{k}_{0}$. 
    Using \eqref{eq::rel_formaKilling}, the relation $\mathcal{Q}_\theta([X,\cdot],\cdot)=-\mathcal{Q}_\theta(\cdot,[\theta X,\cdot])$ for any $X\in\g{g}$ (which follows from the invariance of the rescaled Killing form $\mathcal{Q}$), and \eqref{eq:brackets}, we get:
        \[
    \begin{aligned}
    	\langle [\theta U, V], B \rangle =\mathcal{Q}_\theta([\theta U, V], B) = -\mathcal{Q}_\theta(V,[U,B])= \frac{1}{2} \mathcal{Q}_\theta( U, V) =\langle U, V \rangle.
    \end{aligned}
    \]

    (d) Since $\theta U\in\g{g}_{-\alpha}$ and $Z\in\g{g}_{2\alpha}$, we have $[\theta U,Z]\in\g{g}_\alpha$. Let $V\in \g{g}_\alpha$. Then, using the relations~\eqref{eq::rel_formaKilling}, \eqref{eq:brackets}, and the invariance of $\mathcal{Q}$, we have:
    \[
    2\langle[\theta U,Z],V\rangle= \mathcal{Q}_\theta([\theta U,Z],V)=-\mathcal{Q}_\theta(Z,[U,V])=-\langle JU,V\rangle \mathcal{Q}_\theta(Z,Z)=-2\langle JU,V\rangle.
    \]

    (e) As in (c), $[\theta Z, Z] \in \g{a} \oplus \g{k}_0$. For any $T \in \g{k}_0$, using the invariance of $\mathcal{Q}$, we have
    \[
        \mathcal{Q}_\theta( [\theta Z, Z], T ) =-\mathcal{Q}_\theta(Z, [Z,T]) =0,
    \]
    since $[Z, T] = 0$. Thus, $[\theta Z, Z]$ is proportional to $B$. Calculating the projection yields
    \[
    \begin{aligned}
        \langle [\theta Z, Z], B \rangle &= \mathcal{Q}_\theta([\theta Z, Z], B) = -\mathcal{Q}_\theta(Z, [Z, B])=\mathcal{Q}_\theta(Z,Z)=2\langle Z, Z\rangle=2.
    \end{aligned}\qedhere
    \]
\end{proof}
\subsection{The second fundamental form of a homogeneous submanifold}\label{subsec:II}

We will now derive a convenient formula for the second fundamental form $\mathrm{II}$ of a wide family of homogeneous submanifolds of $\mathbb{C} H^n$, based on a more general formula that holds for every symmetric space of noncompact type. Recall that the mean curvature vector field $\mathcal{H}$ of a submanifold $S$ is given by $\mathcal{H}_p=\sum_{X \in \mathsf{B}_p}\mathrm{II}(X,X)$, where $p\in S$ and $\mathsf{B}_p$ is an orthonormal basis of $T_p S$. Then $S$ is called minimal if $\mathcal{H}$ vanishes identically. To verify whether a homogeneous submanifold is minimal, it suffices to compute the mean curvature vector at one point.

Let $M = G/K$ be a symmetric space of noncompact type with base point $o$ and let $S \subset M$ be a homogeneous submanifold. By homogeneity of $M$, we can assume that $o \in S$. Let $H$ be a connected Lie subgroup of $G$ such that $S = H \cdot o$. We denote by $\g{h} \subset \g{g}$ the Lie algebra of $H$.
Under the canonical identification $T_{o}M \cong \g{p}$, the tangent space $T_{o}S$ corresponds to the subspace $\g{h}_{\g{p}}$ of $\g{p}$, where the subscript denotes projection with respect to the orthogonal sum $\g{k}\oplus\g{p}$ (or equivalently, $\mathcal{Q}_\theta$-orthogonal projection).

The following formula will be essential for our calculations. It was derived by Solonenko for symmetric spaces~\cite[Proposition 2.2.43]{Solonenko_thesis}. See also~\cite[Proposition 2.2]{ADS} for an alternative expression and~\cite[Lemma 2.1]{LN-RV} for a generalization to homogeneous spaces.

\begin{proposition}\label{PropSolonenkoFormula}
Let $M=G/K$ be a symmetric space with base point $o$. Let $H$ be a connected Lie subgroup of $G$, and let $S=H\cdot o$. Then, for any $X,Y \in T_{o}S \cong \g{h}_{\g{p}}$, the second fundamental form of $S$ is given by
\begin{equation*}
\mathrm{II}^{\g{p}}(X,Y)=[X_{\g{k}}, Y]_{\g{h}^{\perp}_{\g{p}}},
\end{equation*}
where $X_{\g{k}} \in \g{k}$ is a vector such that $X+X_{\g{k}} \in \g{h}$, and $(\cdot)_{\g{h}^{\perp}_{\g{p}}}$ denotes the orthogonal projection onto the normal space to $S$ at $o$ regarded as a subspace of $\g{p}$. 
\end{proposition}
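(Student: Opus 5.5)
Proposition~\ref{PropSolonenkoFormula} can be derived from the Levi-Civita connection of $M$ expressed through Killing fields. Each $W\in\g{g}$ induces a Killing vector field $W^*$ on $M$, given by $W^*_p=\frac{d}{dt}\big|_{t=0}\exp(tW)\cdot p$. The orbit $S=H\cdot o$ is tangent to all $W^*$ with $W\in\g{h}$. Given $X,Y\in\g{h}_{\g{p}}$, we choose $X_{\g{k}},Y_{\g{k}}\in\g{k}$ with $\hat X=X+X_{\g{k}}\in\g{h}$ and $\hat Y=Y+Y_{\g{k}}\in\g{h}$. Then $\hat X^*,\hat Y^*$ are vector fields tangent to $S$ extending $X,Y$ at $o$. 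Hence $\mathrm{II}(X,Y)=(\nabla_{\hat X^*}\hat Y^*)_o^\perp$, and the task reduces to computing $(\nabla_{\hat X^*}\hat Y^*)_o$ in terms of brackets in $\g{g}$.

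The key step is the standard formula, at the base point of a symmetric space, for $(\nabla_{U^*}V^*)_o$ with $U,V\in\g{g}$. We split $U=U_{\g{k}}+U_{\g{p}}$, and similarly for $V$, and use two facts. For $T\in\g{k}$, $T^*$ vanishes at $o$, and its covariant derivative there is $(\nabla T^*)_o(v)=[T,v]$ for $v\in\g{p}\cong T_oM$, up to a sign fixed by the convention $W^*\leftrightarrow -W$ or $W$. For $P\in\g{p}$, $P^*$ is a transvection field, so $(\nabla P^*)_o=0$, because the transvections along geodesics through $o$ parallel translate. Since $\nabla$ is tensorial in its lower slot, $(\nabla_{U^*}V^*)_o=(\nabla_{U_{\g{p}}}V^*)_o=(\nabla_{U_{\g{p}}}V_{\g{k}}^*)_o$. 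Applying the first fact to $V_{\g{k}}$ gives $\pm[V_{\g{k}},U_{\g{p}}]$. Combining this with torsion-freeness, $\nabla_{U^*}V^*-\nabla_{V^*}U^*=[U^*,V^*]=-[U,V]^*$, lets us rewrite the expression in the symmetric form $[X_{\g{k}},Y]$ modulo terms tangent to $S$. Indeed, $[\hat X,\hat Y]\in\g{h}$, so $[\hat X,\hat Y]^*_o\in T_oS$, and its normal component vanishes. Projecting onto $\g{h}^\perp_{\g{p}}$ then yields $\mathrm{II}(X,Y)=[X_{\g{k}},Y]_{\g{h}^\perp_{\g{p}}}$. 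The difference between $[X_{\g{k}},Y]$ and $[Y_{\g{k}},X]$ is $[\hat X,\hat Y]_{\g{p}}-[X,Y]-[X_{\g{k}},Y_{\g{k}}]$ projected to $\g{p}$. Here $[X,Y]\in\g{k}$ and $[X_{\g{k}},Y_{\g{k}}]\in\g{k}$, so this difference is tangent. This confirms the expected symmetry.

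We also check that the formula is independent of the choice of $X_{\g{k}}$. Two choices differ by an element $T\in\g{h}\cap\g{k}$, which lies in the Lie algebra of the isotropy of $S$ at $o$. Its adjoint action preserves $T_oS$ and, being skew, also preserves the normal space. Hence $[T,Y]$ is tangent for $Y\in\g{h}_{\g{p}}$: write $Y=\hat Y-Y_{\g{k}}$, so that $[T,\hat Y]\in\g{h}$ has $\g{p}$-part in $\g{h}_{\g{p}}$, while $[T,Y_{\g{k}}]\in\g{k}$. The main obstacle is keeping the sign conventions consistent between fundamental vector fields and Lie brackets. We would fix them by testing the formula on a geodesic orbit of $\exp(t(X+T))$, or simply cite \cite[Proposition 2.2.43]{Solonenko_thesis}.
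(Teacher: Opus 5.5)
The paper does not prove this proposition: it quotes the formula from Solonenko's thesis \cite{Solonenko_thesis} and points to two alternative references. Your Killing-field derivation is therefore a self-contained replacement for that citation, and it is correct. What it buys over the citation is transparency. It isolates the two facts behind the formula: $\g{p}$-Killing fields are parallel at $o$, so only the $\g{k}$-component of $\hat Y$ contributes, and $(\nabla T^*)_o$ acts as $\ad(T)$ on $\g{p}$. It also shows why the choice of $X_{\g{k}}$ is irrelevant, which the paper needs later when it works with an arbitrary $T\in\g{k}_0$ satisfying $X+T\in\g{h}$. Your independence check is right: two choices differ by $T\in\g{h}\cap\g{k}$, and $[T,Y]=[T,\hat Y]_{\g{p}}\in\g{h}_{\g{p}}$.

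Two points should be tightened.

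First, the sign is not left to convention once you adopt the paper's identification $(\phi_o)_{*e}$, that is, $P\leftrightarrow P^*_o$. Since $T^*_o=0$ and $[U^*,V^*]=-[U,V]^*$ for a left action, you get $(\nabla_P T^*)_o=[P^*,T^*]_o=[T,P]^*_o\leftrightarrow [T,P]$. Hence $(\nabla_{\hat X^*}\hat Y^*)_o=[Y_{\g{k}},X]$, with a plus sign. Falling back on citing Solonenko's proposition would be circular as a proof.

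Second, the direct computation yields $[Y_{\g{k}},X]$, not $[X_{\g{k}},Y]$. The passage to the stated form is exactly your identity $[X_{\g{k}},Y]-[Y_{\g{k}},X]=[\hat X,\hat Y]_{\g{p}}\in\g{h}_{\g{p}}$. That identity alone suffices, so the separate appeal to torsion-freeness is redundant.
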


As it will follow from \S~\ref{subsec:reduction}, most of our efforts will be devoted to analyze subgroups $H$ of parabolic subgroups of $G=\mathrm{SU}(1,n)$. Since complex hyperbolic spaces have rank one, $G$ has exactly one (proper) parabolic subgroup up to conjugacy. Its identity connected component is of the form $K_0AN$, where $K_0$ is the connected subgroup of $K$ with Lie algebra $\g{k}_0$. In other words, a parabolic subalgebra of $\g{g}$ is of the form $\g{k}_0\oplus\g{a}\oplus\g{n}$. Therefore, it will be convenient to translate Proposition~\ref{PropSolonenkoFormula} into this setting, namely, restricting to subgroups $H$ of $K_0AN$ and taking advantage of the fact that $\mathbb{C} H^n$ is isometric to $AN$ with the left-invariant metric $\langle\cdot,\cdot\rangle$. Thus, we identify the tangent space $T_{o}\mathbb{C}H^n\cong\g{p}$ with the Lie algebra $\g{a}\oplus \g{n}$ through the linear isomorphism $\Psi = \frac{1}{2}(1-\theta)\vert_{\g{a}\oplus \g{n}}$, which is an isometry between $(\g{a}\oplus \g{n}, \langle \cdot, \cdot \rangle)$ and $(\g{p}, \mathcal{Q}_\theta\vert_{\g{p}\times\g{p}})$ (cf.~\cite[Lemma~2.2]{DRDVK:mathz}).

\begin{proposition}\label{prop:Solonenko_an}
    Let $K_0AN$ be the connected component of the identity of a  parabolic subgroup of the isometry group of $\mathbb{C} H^n$. Let $H$ be a connected Lie subgroup of $K_0AN$ with Lie algebra $\g{h}$, and let $S=H\cdot o$. We identify the tangent space $T_{o}S$ with  $\g{h}_{\g{a}\oplus \g{n}}$, the component of $\g{h}$ in $\g{a}\oplus \g{n}$ with respect to $\g{k}\oplus\g{a}\oplus\g{n}$, and the normal space $\nu_{o}S$ with the $\langle \cdot, \cdot \rangle$-orthogonal complement $\g{h}_{\g{a}\oplus \g{n}}^{\perp}$  in~$\g{a}\oplus \g{n}$.
    
    Let  $X = aB+U+xZ$, $Y=bB+V+yZ \in \g{h}_{\g{a}\oplus \g{n}}$ be tangent vectors, where $U,V\in\g{g}_\alpha$ and $a,b,x,y\in\mathbb{R}$. Then, the second fundamental form of $S$ at $o$ is given by
    \begin{equation*}
    \begin{split}
        \mathrm{II}(X,Y) = \left( \Bigl( \frac{\langle U,V \rangle}{2}+xy \Bigr)B -\frac{b}{2}U-\frac{y}{2}JU-\frac{x}{2}JV +[T,V]     -\Bigl(bx+\frac{\langle JV,U\rangle}{2}\Bigr) Z \right)^{\perp},
    \end{split}
    \end{equation*}
    where $T \in \g{k}_0$ is a vector such that $X+T \in \g{h}$, and $(\cdot)^\perp$ denotes $\langle \cdot, \cdot \rangle$-projection onto $\g{h}_{\g{a}\oplus \g{n}}^{\perp}$.
\end{proposition}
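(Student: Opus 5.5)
The plan is to apply Proposition~\ref{PropSolonenkoFormula} and transport it to $\g{a}\oplus\g{n}$ through the isometry $\Psi=\frac12(1-\theta)|_{\g{a}\oplus\g{n}}$.

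First we translate the data. Every element of $\g{h}\subset\g{k}_0\oplus\g{a}\oplus\g{n}$ can be written as $T+X$ with $T\in\g{k}_0$ and $X\in\g{a}\oplus\g{n}$. Since $X=\Psi X+\frac12(X+\theta X)$ and $\frac12(X+\theta X)\in\g{k}$, the $\g{p}$-component of $T+X$ is $\Psi X$. The $\g{k}$-component is
\[
X_{\g{k}}=T+\tfrac12(X+\theta X).
\]
This shows that $T_oS\cong\g{h}_{\g{p}}=\Psi(\g{h}_{\g{a}\oplus\g{n}})$. Because $\Psi$ is an isometry, the normal space corresponds to $\Psi(\g{h}^\perp_{\g{a}\oplus\g{n}})$. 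Hence $\mathrm{II}(X,Y)=\Psi^{-1}\bigl([X_{\g{k}},\Psi Y]_{\g{h}^\perp_{\g{p}}}\bigr)$, and the task reduces to computing $[T+\frac12(X+\theta X),\ \frac12(Y-\theta Y)]$ and then taking its $\g{p}$-part.

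Next we expand this bracket using the root grading, \eqref{eq:brackets} and Lemma~\ref{lemma_id_brackets_ttheta}. Since $\theta B=-B$, we have $X+\theta X=U+\theta U+x(Z+\theta Z)$ and $Y-\theta Y=2bB+V-\theta V+y(Z-\theta Z)$. The bracket therefore splits into three groups of terms.
\begin{itemize}
\item The $T$-terms: since $\g{k}_0$ commutes with $\theta$, centralizes $\g{a}$ and $Z$, and normalizes $\g{g}_\alpha$, we get $[T,\frac12(V-\theta V)]=\Psi[T,V]$.
\item The $b$-terms: $[U+\theta U,bB]$ and $x[Z+\theta Z,bB]$, computed with (a), (b) and \eqref{eq:brackets}.
\item The mixed terms $[U,V]$, $[\theta U,V]$, $[U,\theta V]$, $[\theta U,\theta V]$, $[U,Z]$, $[\theta U,Z]$, $[Z,\theta Z]$, together with their $\theta$-images, computed with (c), (d), (e).
\end{itemize}
After expanding, we discard the $\g{k}$-part, which is only the $\g{k}_0$ ambiguity from (c) together with $\theta$-symmetric combinations, and write the $\g{p}$-part as $\Psi W$ for an explicit $W\in\g{a}\oplus\g{n}$. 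For instance, $[\theta U,V]-[U,\theta V]$ contributes $2\langle U,V\rangle B$ modulo $\g{k}$, which after the factor $\frac14$ gives $\frac{\langle U,V\rangle}{2}B$. Similarly, $[\theta Z,Z]=2B$ gives the $xyB$ term, and (d) together with its $\theta$-image produces the $-\frac y2JU-\frac x2JV$ terms. The key identity we use repeatedly is that the $\g{p}$-part of $A+\theta A'$ with $A,A'\in\g{n}$ equals $\Psi(A-A')$.

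The main obstacle is the bookkeeping of signs and factors of $2$. It arises from the relation $\mathcal{Q}_\theta=2\langle\cdot,\cdot\rangle$ on $\g{n}$ and from keeping track of which terms are $\theta$-even (and so lie in $\g{k}$) and which are $\theta$-odd. We would carry this out carefully term by term and cross-check against the symmetry $\mathrm{II}(X,Y)=\mathrm{II}(Y,X)$; for $T=0$ this should also reproduce the known formula for the Levi-Civita connection of $AN$. Finally, because $\Psi$ is an isometry, orthogonal projection commutes with $\Psi^{-1}$, which yields the displayed formula projected onto $\g{h}^\perp_{\g{a}\oplus\g{n}}$.
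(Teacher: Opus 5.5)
Your proposal is correct and follows essentially the same route as the paper. You transport Proposition~\ref{PropSolonenkoFormula} through the isometry $\Psi$, take the $\g{k}$-part of $X+T\in\g{h}$ to be $T+\frac{1}{2}(X+\theta X)$, and expand the bracket using \eqref{eq:brackets} and Lemma~\ref{lemma_id_brackets_ttheta}; your pairing of each term with its $\theta$-image is what the paper packages as $\bigl[\frac{1}{2}(1+\theta)\tilde{X},\frac{1}{2}(1-\theta)Y\bigr]=\frac{1}{4}(1-\theta)\bigl([\tilde{X},Y]+[\theta\tilde{X},Y]\bigr)$. One small correction: since $[\g{k},\g{p}]\subset\g{p}$, the bracket already lies in $\g{p}$, so there is no $\g{k}$-part to discard—the $\g{k}_0$-ambiguity from Lemma~\ref{lemma_id_brackets_ttheta}(c) simply cancels, because $[U,\theta V]=\theta[\theta U,V]$.
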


\begin{proof}
    First note that $\frac{1}{2}(1\pm\theta)$ are the  orthogonal projections onto $\g{k}$ and $\g{p}$, respectively, with respect to $\mathcal{Q}_\theta$. Then, $\Psi = \frac{1}{2}(1-\theta)\vert_{\g{a}\oplus \g{n}} \colon \g{a}\oplus \g{n} \to \g{p}$ is a linear isometry that maps $\g{h}_{\g{a}\oplus\g{n}}$ onto $\g{h}_\g{p}$. Hence, it intertwines the respective $\g{a}\oplus\g{n}$ and $\g{p}$-projections onto the normal space to $S$ at $o$, that is,  $\Psi(X)_{\g{h}^\perp_{\g{p}}} = \Psi(X^{\perp})$ for any $X \in \g{a}\oplus\g{n}$.
    Thus, the second fundamental form $\mathrm{II}$ on $\g{a}\oplus \g{n}$ is related to the second fundamental form $\mathrm{II}^{\g{p}}$ on $\g{p}$ by $
       \Psi\circ \mathrm{II} =\Psi^*\mathrm{II}^{\g{p}}$.
       
    Let $X,Y\in\g{h}_{\g{a}\oplus\g{n}}$, and let $T \in \g{k}_0$ such that $\tilde{X} = X + T \in \g{h}$. Note that $\tilde{X} =\tilde{X}_{\g{k}} + \Psi(X)$ is the splitting of $\tilde{X}$ with respect to the Cartan decomposition $\g{g}=\g{k}\oplus\g{p}$, where $\tilde{X}_{\g{k}} = \frac{1}{2}(1+\theta)\tilde{X}$ and $\Psi(X)$ are the $\g{k}$ and $\g{p}$-components of $\tilde{X}$, respectively. 
    Applying Proposition~\ref{PropSolonenkoFormula}, we get:
    \begin{equation*}
    \Psi(\mathrm{II}(X,Y))=\mathrm{II}^{\g{p}}(\Psi(X),\Psi(Y))= [\tilde{X}_{\g{k}},\Psi(Y)]_{\g{h}^{\perp}_{\g{p}}}
     = \Bigl[\frac{1}{2}(1+\theta)\tilde{X},\frac{1}{2}(1-\theta)Y \Bigr]_{\g{h}^{\perp}_{\g{p}}}.
    \end{equation*}
	We can expand this expression by using the fact that $\theta$ is an involutive automorphism of~$\g{g}$:
    \begin{equation}\label{eq:PsiII}
    \begin{aligned}
       \Psi\left( \mathrm{II}(X,Y)\right) =\left( \frac{1}{4}(1-\theta) \left( [\tilde{X}, Y] + [\theta \tilde{X}, Y ] \right) \right)_{\g{h}^\perp_{\g{p}}} 
    \end{aligned}
    \end{equation}
    We now compute the term inside the inner parentheses using the structural equations of $\mathbb{C}H^n$ and the identities from Lemma~\ref{lemma_id_brackets_ttheta}. 
    Substituting $\tilde{X} = aB+U+xZ+T$, $\theta \tilde{X}=-aB+\theta U +x\theta Z+T$ and $Y = bB+V+yZ$, and using the fact that $[\g{g}_\lambda,\g{g}_\mu]\subset\g{g}_{\lambda+\mu}$ for any roots $\lambda,\mu$, $[T,B]=[T,Z]=0$ since $T\in\g{k}_0$, the identities in~\eqref{eq:brackets} and Lemma~\ref{lemma_id_brackets_ttheta}, we have
    \[
    \begin{aligned}
         [\tilde{X}, Y] + [\theta \tilde{X}, Y ] &=   b[U,B]+ [U,V] +bx[Z,B]+ 2[T,V]
        +b[\theta U,B]
        \\&\quad+ [\theta U, V] + y[\theta U, Z] 
         +bx[\theta Z,B]+ x[\theta Z, V] + xy[\theta Z, Z] \\
        &= \left(\langle U,V \rangle+2xy \right)B-b\frac{(1-\theta)}{2}U-yJU+x\theta JV+2[T,V]\\
        &\quad-bx(1-\theta)Z-\langle JV,U \rangle Z \pmod{\g{k}_0}.
    \end{aligned}
    \]
	Applying $\frac{1}{4}(1-\theta)$ to this expression and taking into account that $(\frac{1}{2}(1-\theta))^2=\frac{1}{2}(1-\theta)$, the term inside the outer parentheses on the right hand side of~\eqref{eq:PsiII} can be rewritten as
    \begin{equation}\label{eq:proj}
    \begin{aligned}
         \frac{1-\theta}{2}\Bigg( \left( \frac{\langle U,V \rangle}{2}+xy \right)B -\frac{b}{2}U-\frac{y}{2}JU-\frac{x}{2}JV +[T,V]
        -\left(bx+\frac{\langle JV,U\rangle}{2}\right) Z \Bigg).
    \end{aligned}
    \end{equation}
    Note that in this expression, all summands inside the outer parentheses belong to $\g{a}\oplus\g{n}$. Then, we can substitute the projection $\frac{1}{2}(1-\theta)$ by the isometry $\Psi$. Using that $\Psi$ intertwines the  $\g{a}\oplus\g{n}$ and $\g{p}$-projections onto the normal space to $S$ at $o$, from \eqref{eq:PsiII} and~\eqref{eq:proj} we readily get the formula for $\mathrm{II}(X,Y)$ in the statement. 
    \end{proof}

\begin{remark}
Since we are primarily interested in calculating the mean curvature, we will mostly use the formula in Proposition~\ref{prop:Solonenko_an} when $X=Y=aB+U+xZ$, which reduces to
    \begin{equation}\label{eq:SFF_H}
           \mathrm{II}(X,X) = \Bigg( \left( \frac{|U|^2}{2}+x^2 \right)B -\frac{a}{2}U-xJU +[T,U] -axZ \Bigg)^{\perp}.
    \end{equation}
\end{remark}

\subsection{Reduction to subgroups of $K_{0}AN$}\label{subsec:reduction}

We will now show that, in order to obtain a complete classification of minimal homogeneous submanifolds in $\mathbb{C}H^n$,  it suffices to classify the minimal orbits through the base point $o$ of subgroups of the parabolic group $K_{0}AN$. We will actually show this for every rank-one symmetric space of noncompact type.

Let $M=G/K$ be a symmetric space of noncompact type, with $G=\mathrm{Isom}^0(M)$ and isotropy $K$ at a base point $o\in M$. Following~\cite[\S~2.17]{Eberlein}, a parabolic subgroup of $G$ can be geometrically defined as a stabilizer of a point in the ideal boundary $M(\infty)$ of $M$. Note that, with this definition, the total group $G$ is not parabolic. If $M$ has rank one, the isotropy $K$ acts transitively on the ideal boundary $M(\infty)$, so there is only one parabolic subgroup of $G$ up to conjugacy. Its connected component of the identity is precisely the Lie subgroup $K_0AN$ of $G$ with Lie algebra $\g{k}_0\oplus\g{a}\oplus\g{n}$. The definition and properties of the factors $\g{k}_0$, $\g{a}$, $\g{n}$ and the corresponding subgroups for any rank-one symmetric space are analogous to those given in Section~\ref{subsec:CHn} for $\mathbb{C} H^n$ (see~\cite[\S~4.1]{DRDVSL:saopaulo}).

Now, let $S$ be a homogeneous submanifold of a rank-one symmetric space $M=G/K$ of noncompact type. We can express $S$ as an orbit $H \cdot p$, where $H$ is a connected Lie subgroup of the isometry group $G$ and $p \in M$. As proved in \cite[Theorem~6.2]{ADS}, $H$ is contained in a parabolic subgroup of $G$ or $H$ has a totally geodesic orbit. If $H$ has a totally geodesic orbit, it follows from \cite[Proposition~5.5]{ADS} that the minimal orbit $S=H\cdot p$ must be totally geodesic as well. Therefore, we have the following result.

\begin{proposition}\label{prop:reductive}
	Let $S=H\cdot p$ be a minimal homogeneous submanifold of a rank-one symmetric space $M=G/K$. Then, $S$ is a totally geodesic submanifold  of $M$ or $H$ is contained in a  parabolic subgroup of $G$.
\end{proposition}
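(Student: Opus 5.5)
The argument combines the two results of Alekseevsky and Di Scala quoted just before the statement. First I put the orbit in a form where those results apply. Let $S$ be minimal and homogeneous, so $S=H\cdot p$ with $H$ a connected Lie subgroup of $G=\mathrm{Isom}^0(M)$ and $p\in M$. The orbit foliation of $H$ and all the relevant properties (minimality, total geodesicity, lying in a parabolic subgroup) are invariant under conjugation in $G$, since conjugation maps orbits to orbits isometrically. Hence I may assume $p=o$ whenever needed.

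Next I invoke the dichotomy of \cite[Theorem~6.2]{ADS}. For a connected subgroup $H$ of the isometry group of a rank-one symmetric space of noncompact type, either $H$ lies in a proper parabolic subgroup of $G$ (the stabilizer of a point of $M(\infty)$), or $H$ has a totally geodesic orbit. If $H$ lies in a parabolic subgroup, the second alternative of the statement holds and there is nothing to prove. The key point here is that \cite[Theorem~6.2]{ADS} should be read with the geometric definition of parabolic subgroup from \cite[\S~2.17]{Eberlein}. If $H$ fixes no point at infinity, the standard argument (for example a reductive or compact-type Levi analysis, or the fixed point theory at infinity) produces an $H$-invariant totally geodesic submanifold on which $H$ acts with a totally geodesic orbit. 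I take this as given from the cited theorem.

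In the remaining case $H$ admits some totally geodesic orbit $H\cdot q$, and I apply \cite[Proposition~5.5]{ADS}. That result says that if an isometric action on a Hadamard manifold of this type has a totally geodesic orbit, then every minimal orbit is totally geodesic. Conceptually, the totally geodesic orbit $F=H\cdot q$ is a convex set, and the distance function to $F$ is $H$-invariant and convex. Along a minimal orbit $S$, the Laplacian of this convex function restricted to $S$ is nonnegative. Since the function is constant on $S$ by $H$-invariance, its Hessian trace along $S$ vanishes, which forces $S$ to lie in a region where the function has vanishing relevant Hessian, and hence forces $S$ to be totally geodesic (in fact parallel to $F$). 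Applying this to our minimal orbit $S=H\cdot p$ shows that $S$ is totally geodesic.

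Combining the two cases gives the proposition. The only potentially delicate step is the compatibility of the notions involved. One must make sure that the hypotheses of \cite[Theorem~6.2]{ADS} (connectedness of $H$, noncompact type, rank one) match ours, and that its notion of parabolic subgroup agrees with the geometric one used here, so that $G$ itself is excluded. I expect that verification to be the main obstacle, and I would check it against the cited statements before concluding.
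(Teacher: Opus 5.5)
Your proposal is correct and follows the paper's argument: the paper combines the dichotomy of Alekseevsky--Di Scala's Theorem~6.2 ($H$ lies in a parabolic subgroup or has a totally geodesic orbit) with their Proposition~5.5 (if there is a totally geodesic orbit, then the minimal orbit $H\cdot p$ is totally geodesic). Your reduction to $p=o$ and your convexity sketch of Proposition~5.5 are not needed, since you rely on the citation anyway; the sketch is also slightly off, because in rank one the strict convexity of the distance to $F$ forces a minimal orbit different from $F$ to be a fixed point, not a ``parallel'' copy of $F$.
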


Thus, from now on we focus on the case where $H$ is contained in a parabolic subgroup of $G$, which up to conjugation can be taken as $K_0AN$.
A priori, one must study orbits of the form $H \cdot p$ for arbitrary $p \in M$ and arbitrary $H \subset K_0 AN$. The following lemma allows us to restrict our attention solely to orbits through the base point $o\in M$.

\begin{lemma}\label{lemma:reduction_K0AN}
    Let $H$ be a subgroup of $K_0 AN$ and $p \in M$. Then, the orbit $H \cdot p$ is congruent to an orbit $\tilde{H} \cdot o$, where $\tilde{H}$ is a subgroup of $K_0 AN$ that is $AN$-conjugate to $H$.
\end{lemma}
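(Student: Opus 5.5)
Since $AN$ acts simply transitively on $M$, there is a unique $g\in AN$ with $g\cdot o = p$. Set $\tilde H = g^{-1} H g$. This is a subgroup of $K_0AN$, because $AN\subset K_0AN$ and $H\subset K_0AN$, and it is $AN$-conjugate to $H$ by construction. The isometry $g^{-1}$ of $M$ carries the orbit $H\cdot p$ onto
\[
g^{-1}H\cdot p = g^{-1}Hg\cdot (g^{-1}\cdot p)=\tilde H\cdot o,
\]
so $H\cdot p$ and $\tilde H\cdot o$ are congruent, which is the statement of the lemma.

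The plan therefore has three steps. First, recall from \S\ref{subsec:CHn}, and the analogous rank-one facts cited in \S\ref{subsec:reduction}, that $G=KAN$ and that $AN$ acts simply transitively on $M$; this gives the element $g$. Second, check that $K_0AN$ is a group containing $AN$, so that conjugation by $g$ preserves it. This holds because $K_0AN$ is the identity component of a parabolic subgroup, and $K_0$ normalizes $AN$ since $\g{k}_0$ normalizes $\g{a}$, $\g{g}_\alpha$ and $\g{g}_{2\alpha}$. Third, perform the displayed computation. If $H$ is connected, then $\tilde H$ is connected too, with Lie algebra $\mathrm{Ad}(g^{-1})\g{h}$.

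No step is a genuine obstacle. The only point that needs care is the second one: the conjugating element must be chosen in $AN$ rather than in an arbitrary subgroup of $G$, and it is precisely the transitivity of $AN$ on $M$ that allows this, so that the conjugated group stays inside the parabolic subgroup $K_0AN$.
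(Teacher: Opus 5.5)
Your proposal is correct and follows the paper's proof: choose the unique $g\in AN$ with $g\cdot o=p$, set $\tilde H=g^{-1}Hg\subset K_0AN$, and observe that $H\cdot p=g\,\tilde H\cdot o$ is congruent to $\tilde H\cdot o$. Your added check that $K_0AN$ is a group containing $AN$ is harmless; the paper takes it for granted.
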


\begin{proof}
    Since the group $AN$ acts simply transitively on $M$, there exists a unique element $g \in AN$ such that $g \cdot o = p$. We can rewrite the orbit as:
    \[
        H \cdot p = H \cdot (g \cdot o) = H g \cdot o = g (g^{-1} H g) \cdot o.
    \]
    Let $\tilde{H} = g^{-1} H g$. Then, the orbit $H \cdot p$ is congruent to the orbit $\tilde{H} \cdot o$ via the isometry $g$. Since $H \subset K_0 AN$ and $g \in AN \subset K_0 AN$, it follows trivially that $\tilde{H} = g^{-1}Hg \subset K_0 AN$.
\end{proof}

\section{Proof of the main results}~\label{sec:classification}

In this section we prove the classification result in Theorem~\ref{Thm1} and, as corollaries, Theorems~\ref{th:short} and~\ref{th:isop}. Based on the conclusion of the previous section~\S~\ref{subsec:reduction}, we will consider a connected Lie subgroup $H$ of the isometry group $G$ of $\mathbb{C} H^n$ with Lie algebra $\g{h}$ contained in the parabolic subalgebra $\g{k}_0 \oplus \g{a} \oplus \g{n}$. Our approach relies on the analysis of the possible Lie subalgebras $\g{h}$ in terms of their projections onto the solvable part $\g{a} \oplus \g{n}$ and the compact part $\g{k}_0$, along with the minimality condition.

Let us start by establishing some useful notation  to relate the $\g{a}\oplus\g{n}$ and the $\g{k}_0$-components of vectors in $\g{h}$. Recall that $\g{k}_0\oplus\g{a}\oplus\g{n}$ is a  $\mathcal{Q}_\theta$-orthogonal direct sum.
Given a Lie subalgebra $\g{h}$ of $\g{k}_0\oplus\g{a}\oplus\g{n}$, let us denote by $\pi_{\g{a}\oplus\g{n}}\colon\g{k}_0\oplus\g{a}\oplus\g{n}\to\g{a}\oplus\g{n}$ the  projection map onto $\g{a}\oplus\g{n}$. We decompose $\g{h}$ as a $\mathcal{Q}_\theta$-orthogonal direct sum $    \g{h} = \g{q}\oplus \widetilde{\g{h}}$,
where $\g{q} = \g{h} \cap \g{k}_0 = \ker(\pi_{\g{a}\oplus \g{n}}|_{\g{h}})$, and $\widetilde{\g{h}}$ is the orthogonal complement to $\g{q}$ in $\g{h}$. Then the restriction $\pi_{\g{a}\oplus \g{n}}\vert_{\widetilde{\g{h}}}\colon\widetilde{\g{h}}\to\g{h}_{\g{a}\oplus\g{n}}$ is bijective.  It is important to note that the subspace $\widetilde{\g{h}}$ is not necessarily a Lie subalgebra~of~$\g{g}$. 

In order to relate the $\g{a}\oplus\g{n}$ and $\g{k}_0$-components of $\widetilde{\g{h}}\subset\g{h}$, we define the map 
\[
\Phi \colon \g{h}_{\g{a}\oplus \g{n}} \;\rightarrow\; \widetilde{\g{h}}_{\g{k}_0}, 
\quad
X \;\mapsto\; \Phi(X), \;\text{the unique element of } \widetilde{\g{h}}_{\g{k}_0} \text{ such that } X+\Phi(X)\in \g{h}.
\]
Hereafter, subscripts are used to denote the  components of $\g{h}$ or $\widetilde{\g{h}}$ with respect to $\g{k}\oplus\g{a}\oplus\g{n}$ (or equivalently, the $\mathcal{Q}_\theta$-orthogonal projection onto the subspace indicated in the subscript).
To see that $\Phi$ is well defined, let $T, T'\in \widetilde{\g{h}}_{\g{k}_{0}}$ be such that both $X+T$, $X+T'\in\g{h}$. Their difference $T-T'$ must lie in $\g{h} \cap \g{k}_{0} = \g{q}$, but also in $\widetilde{\g{h}}_{\g{k}_{0}}$. Since $\widetilde{\g{h}}_{\g{k}_{0}} \cap \g{q}=0$, we get  $T=T'$. One can  easily see that $\Phi$ is a linear map. 

Let $\g{m}=\widetilde{\g{h}}\cap(\g{g}_\alpha\oplus\g{k}_0)$. We can consider the $\mathcal{Q}_\theta$-orthogonal sum 
\[
\g{m}=\g{m}_{nd}\oplus\g{m}_d, \quad\text{ where }\g{m}_{nd}=\ker(\pi_{\g{k}_0}\vert_\g{m})\subset\g{g}_\alpha \text{ is the non-diagonal part of }\g{m}.
\]
Projecting onto $\g{g}_\alpha\subset\g{a}\oplus\g{n}$, we have the $\langle\cdot,\cdot\rangle$-orthogonal sum of $\g{s}=\pi_{\g{a}\oplus\g{n}}(\g{m})=\g{g}_\alpha\cap \g{h}_{\g{a}\oplus \g{n}}$:
\[
\g{s}=\g{m}_{nd}\oplus\g{s}_d, \quad\text{ where }\quad\g{s}_{d}=\pi_{\g{a}\oplus\g{n}}(\g{m}_{d}).
\]
Note that $\ker\Phi\vert_{\g{s}}=\g{m}_{nd}$.
Then the restriction $\Phi\vert_{\g{s}_d} \colon \g{s}_d\rightarrow \Phi(\g{s}_d)\subset \widetilde{\g{h}}_{\g{k}_{0}}$
is a vector space isomorphism. Moreover, $\g{m}_d=\{W+\Phi(W):W\in\g{s}_d\}$ and $\g{m}=\{W+\Phi(W):W\in\g{s}\}$.
 
By definition of $\g{q}$ and $\g{m}$, we deduce that $\g{q}\oplus\g{m}$ has codimension $k\in\{0,1,2\}$ in $\g{h}$, where $k=\dim\g{h}_{\g{a}\oplus\g{g}_{2\alpha}}$. Thus, there exist $X_1$, $X_2\in\g{k}_0\oplus\g{a}\oplus\g{n}$ such that $\g{h}=\g{q}\oplus\mathrm{span}\{X_1,X_2\}\oplus\g{m}$. By elementary linear algebra, we can further assume the following: the $\g{g}_{2\alpha}$-component of $X_1$ is zero; the $\g{g}_\alpha$-components of $X_1$ and $X_2$ are orthogonal to $\g{s}$ (and hence to $\g{m}$); the $\g{a}$-components of $X_1$ and $X_2$ are chosen such that the $(\g{a}\oplus\g{n})$-components $(X_1)_{\g{a}\oplus\g{n}}$ and $(X_2)_{\g{a}\oplus\g{n}}$ are orthogonal; and the $\g{k}_0$-components of $X_1$, $X_2$ and $\g{m}$ are $\mathcal{Q}_\theta$-orthogonal to $\g{q}$. Thus, we obtain a convenient decomposition of $\g{h}$, which we state in the following proposition (cf.~\cite[p.~1203]{DRDVK:mathz}). 
Together with the subsequent Remarks~\ref{rem:decomposition} and~\ref{rem:generators}, Proposition~\ref{prop:decomposition} will be used implicitly throughout the arguments below.

\begin{proposition}\label{prop:decomposition}
	Let $\g{h}$ be a Lie subalgebra of $\g{k}_0\oplus\g{a}\oplus\g{n}$. Then we have a direct sum
\[
\g{h}=\g{q}\oplus\mathbb{R} X_1\oplus\mathbb{R} X_2 \oplus \g{m}, \qquad \text{with}\quad X_1=aB+U+T,\quad X_2=bB+V+xZ+R,
\]
where $\g{q}=\g{h}\cap\g{k}_0$; $\g{m}\subset \g{k}_0\oplus\g{g}_\alpha$ is a vector subspace  $\mathcal{Q}_\theta$-orthogonal to $\g{q}$; $a,b,x\in\mathbb{R}$; $U,V\in\g{g}_\alpha$ are orthogonal to $\g{s}=\pi_{\g{a}\oplus\g{n}}(\g{m})$; $T,R\in\widetilde{\g{h}}_{\g{k}_0}\subset\g{k}_0$; and $ab+\langle U,V\rangle=0$. 

\end{proposition}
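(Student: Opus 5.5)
The statement is essentially linear algebra, so the plan is to build the decomposition step by step from the subspaces $\g{q}$, $\widetilde{\g{h}}$ and $\g{m}$ introduced above. First, I use that $\g{q}=\g{h}\cap\g{k}_0$ is the kernel of $\pi_{\g{a}\oplus\g{n}}|_{\g{h}}$. Hence $\widetilde{\g{h}}$ is mapped bijectively onto $\g{h}_{\g{a}\oplus\g{n}}$, and $\g{h}=\g{q}\oplus\widetilde{\g{h}}$. By construction every element of $\widetilde{\g{h}}$ has its $\g{k}_0$-component in $\widetilde{\g{h}}_{\g{k}_0}$. It therefore suffices to decompose $\g{h}_{\g{a}\oplus\g{n}}$ appropriately and lift back with $X\mapsto X+\Phi(X)$. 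This lift automatically places the $\g{k}_0$-components $T,R$ and those of $\g{m}$ in $\widetilde{\g{h}}_{\g{k}_0}$.

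Inside $\g{h}_{\g{a}\oplus\g{n}}$, the subspace $\g{s}=\g{g}_\alpha\cap\g{h}_{\g{a}\oplus\g{n}}$ is the kernel of the projection $\rho$ onto $\g{a}\oplus\g{g}_{2\alpha}$ restricted to $\g{h}_{\g{a}\oplus\g{n}}$. Its image has dimension $k\le 2$. I choose a complement of $\g{s}$ inside $\g{h}_{\g{a}\oplus\g{n}}$ contained in $\g{s}^\perp$, namely $\g{h}_{\g{a}\oplus\g{n}}\cap\g{s}^\perp$, which has dimension $k$. This ensures that the $\g{g}_\alpha$-components of the chosen vectors are orthogonal to $\g{s}$, because $\g{s}\subset\g{g}_\alpha$ is orthogonal to $\g{a}\oplus\g{g}_{2\alpha}$.

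Within this $k$-dimensional complement $C$, consider the linear functional "$\g{g}_{2\alpha}$-coefficient". Its kernel in $C$ has dimension at least $k-1$. If $k=2$, I pick $Y_1$ spanning that kernel, so $Y_1=aB+U$. I then take $Y_2$ to be the orthogonal complement of $Y_1$ in $C$, so $Y_2=bB+V+xZ$ and $\langle Y_1,Y_2\rangle=ab+\langle U,V\rangle=0$. If $k=1$, either the single generator has no $Z$-term, in which case I use it as $Y_1$ and set $X_2=0$, or it has one, in which case I use it as $Y_2$ and set $X_1=0$. If $k=0$, both are zero. The orthogonality condition holds trivially in these degenerate cases.

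Finally, I set $X_i=Y_i+\Phi(Y_i)$ and $\g{m}=\{W+\Phi(W):W\in\g{s}\}$, which is contained in $\g{k}_0\oplus\g{g}_\alpha$ and is orthogonal to $\g{q}$. Then $\g{h}=\g{q}\oplus\widetilde{\g{h}}$ and $\widetilde{\g{h}}=\mathbb{R}X_1\oplus\mathbb{R}X_2\oplus\g{m}$ by bijectivity of the lift. The only delicate point is interpreting the statement when $k<2$ (allowing $X_i=0$) and checking that choosing the complement inside $\g{s}^\perp$ does not conflict with the requirement that $X_1$ have zero $Z$-component. Both issues are handled by the dimension count above.
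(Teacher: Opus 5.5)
Your proposal is correct and is essentially the paper's argument. The paper only sketches this step as ``elementary linear algebra'': it chooses a complement of $\g{q}\oplus\g{m}$ in $\g{h}$ and then adjusts the components. You carry the same linear algebra out explicitly by splitting $\g{h}_{\g{a}\oplus\g{n}}=\g{s}\oplus(\g{h}_{\g{a}\oplus\g{n}}\cap\g{s}^\perp)$ with respect to $\langle\cdot,\cdot\rangle$ and lifting via $X\mapsto X+\Phi(X)$, which automatically gives $T,R\in\widetilde{\g{h}}_{\g{k}_0}$ and recovers $\g{m}=\widetilde{\g{h}}\cap(\g{g}_\alpha\oplus\g{k}_0)$. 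The only unstated detail is that for $k=2$ the kernel of the $Z$-coefficient on $C$ is exactly one-dimensional; this holds because $C\cap\g{s}=0$ makes the projection of $C$ onto $\g{a}\oplus\g{g}_{2\alpha}$ bijective.
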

\begin{remark}\label{rem:decomposition}
Proposition~\ref{prop:decomposition} implies that $\g{q}$ is $\mathcal{Q}_\theta$-orthogonal to  the other direct summands of $\g{h}$, and the $(\g{a}\oplus\g{n})$-components of $X_1$, $X_2$ and $\g{m}$ are mutually orthogonal with respect to  $\langle\cdot,\cdot\rangle$. However, $X_1$, $X_2$ themselves do not need to be $\mathcal{Q}_\theta$-orthogonal to each other or $\mathcal{Q}_\theta$-orthogonal to $\g{m}$ (because of the $\g{k}_0$-component). Note also that $X_1$ and/or $X_2$ may be zero. Actually, we can and will assume that:
\begin{itemize}
	\item either $a=1$ or $X_1=0$ (that is, $a=0$, $U=T=0$), and
	\item either $x=1$ or $X_2=0$ (that is, $b=x=0$, $V=R=0$).
\end{itemize}
\end{remark} 

\begin{remark}\label{rem:generators}
Associated with the previous decomposition, we will consider the generating set for
$\widetilde{\g{h}}$ given by $\left\{X_1,X_2\right\} \cup \{W_{i}+\Phi(W_i)\}_{i=1}^{\dim\g{s}}$, where $\{W_{i}+\Phi(W_i)\}_{i=1}^{\dim\g{s}}$ is  a basis of~$\g{m}$. Moreover, we can and will assume that $\{W_i\}_{i=1}^{\dim\g{s}}$ is an orthonormal basis of $\g{s}$, and hence $\{(X_1)_{\g{a}\oplus\g{n}},(X_2)_{\g{a}\oplus\g{n}}, W_1,\dots, W_{\dim\g{s}}\}$ is an orthogonal generating set for $\g{h}_{\g{a}\oplus\g{n}}$.  We will also suppose that $\{W_{i}\}_{i=1}^{d}$, with $d=\dim\g{s}_d$, is an orthonormal basis of $\g{s}_d$. Hence $\{W_i\}_{i=d+1}^{\dim\g{s}}$ is an orthonormal basis of $\g{m}_{nd}=\ker \Phi\vert_{\g{s}}$.
\end{remark}

Our goal is to show that the case $x=0$ leads to Theorem~\ref{Thm1}(a), whereas the case $x=1$  corresponds to Theorem~\ref{Thm1}(b). To show this, the main difficulty lies in handling the $\g{k}_0$-components of the generators of $\widetilde{\g{h}}$. This is addressed in Lemmas~\ref{lemma:case4_brackets}, \ref{lemasandwich} and, most significantly, in the technically more demanding Lemma~\ref{lemma:structure_formulas}, whose proof hinges on a new and crucial argument. These results, together with the minimality condition, will force all $\g{k}_0$-components of the generators of $\widetilde{\g{h}}$ to vanish, with the sole exception of $T=(X_1)_{\g{k}_0}$.

\begin{lemma}\label{lemma:case4_brackets}
	Assume $a=x=1$. Then, we have:
	\begin{equation*}\label{eq:TVU}
		\langle[T,V], U \rangle = -\frac{b}{2}(1+|U|^2) ( 1 + 2\langle JU, V \rangle),
		\qquad
		\langle[U,R],V\rangle = \frac{1}{2}(b^2+|V|^2) ( 1 + 2\langle JU, V \rangle).
	\end{equation*}
\end{lemma}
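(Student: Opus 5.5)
The plan is to use that $\g{h}$ is closed under brackets. We compute $[X_1,X_2]$ with $X_1=B+U+T$ and $X_2=bB+V+Z+R$, and expand it in the decomposition of Proposition~\ref{prop:decomposition}. Comparing components then gives both identities.

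\textbf{Step 1: the bracket.} By \eqref{eq:brackets}, and since $\g{k}_0$ centralizes $\g{a}\oplus\g{g}_{2\alpha}$ and normalizes $\g{g}_\alpha$, we get
\[
[X_1,X_2]=\Bigl(-\frac{b}{2}U+\frac12 V+[T,V]+[U,R]\Bigr)+\bigl(1+\langle JU,V\rangle\bigr)Z+[T,R].
\]
Here the first group lies in $\g{g}_\alpha$ and $[T,R]\in\g{k}_0$. There is no $\g{a}$-component.

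\textbf{Step 2: expand in $\g{h}$.} Since $[X_1,X_2]\in\g{h}=\g{q}\oplus\mathbb{R}X_1\oplus\mathbb{R}X_2\oplus\g{m}$, we can write
\[
[X_1,X_2]=c_1X_1+c_2X_2+(W+\Phi(W))+Q,
\]
with $W\in\g{s}$ and $Q\in\g{q}$.
\begin{itemize}
\item Only $X_2$ has a $\g{g}_{2\alpha}$-component, so comparing $Z$-components gives $c_2=1+\langle JU,V\rangle$.
\item Comparing $\g{a}$-components gives $c_1+bc_2=0$, so $c_1=-b(1+\langle JU,V\rangle)$.
\item The $\g{g}_\alpha$-component gives
\[
-\frac b2U+\frac12V+[T,V]+[U,R]=c_1U+c_2V+W,
\]
where $W\in\g{s}$ is orthogonal to $U$ and $V$ by Proposition~\ref{prop:decomposition}.
\end{itemize}

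\textbf{Step 3: pair with $U$ and with $V$.} Two facts are used here.
\begin{itemize}
\item Elements of $\g{k}_0$ act on $\g{g}_\alpha$ as $\langle\cdot,\cdot\rangle$-skew-symmetric endomorphisms. This follows from invariance of $\mathcal{Q}$, the identity $\theta|_{\g{k}_0}=\mathrm{id}$, and \eqref{eq::rel_formaKilling}. Hence $\langle [U,R],U\rangle=-\langle[R,U],U\rangle=0$ and $\langle[T,V],V\rangle=0$.
\item The normalization $a=1$ gives $\langle U,V\rangle=-b$.
\end{itemize}
Taking the inner product with $U$ gives
\[
-\frac b2|U|^2-\frac b2+\langle[T,V],U\rangle=c_1|U|^2-bc_2.
\]
Substituting $c_1,c_2$ yields
\[
\langle[T,V],U\rangle=\frac b2(1+|U|^2)-b(1+\langle JU,V\rangle)(1+|U|^2),
\]
which is the first formula. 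Taking the inner product with $V$ gives
\[
\frac{b^2}{2}+\frac{|V|^2}{2}+\langle[U,R],V\rangle=-bc_1+c_2|V|^2=(1+\langle JU,V\rangle)(b^2+|V|^2),
\]
which is the second formula.

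The only step needing care is the skew-symmetry of $\ad(\g{k}_0)$ on $(\g{g}_\alpha,\langle\cdot,\cdot\rangle)$. It is standard, since $\mathcal{Q}_\theta$ and $2\langle\cdot,\cdot\rangle$ agree on $\g{n}$. The rest is routine bookkeeping, using the orthogonality relations built into Proposition~\ref{prop:decomposition}.
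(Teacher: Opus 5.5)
Your proposal is correct and follows essentially the same route as the paper: bracket $X_1$ with $X_2$, expand the result in $\g{h}$ with coefficients $c_2=1+\langle JU,V\rangle$ and $c_1=-bc_2$ read off from the $\g{g}_{2\alpha}$- and $\g{a}$-components, and then pair the $\g{g}_\alpha$-components with $U$ and with $V$. You also state explicitly the skew-symmetry of $\ad(\g{k}_0)$ on $(\g{g}_\alpha,\langle\cdot,\cdot\rangle)$, giving $\langle[U,R],U\rangle=\langle[T,V],V\rangle=0$, which the paper uses without comment.
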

\begin{proof}
 Since $\g{h}$ is a Lie subalgebra of $\g{g}$, the following bracket belongs to~$\g{h}$:
	\begin{equation}\label{exp1}
		[B+U+T,bB+V+Z+R] = \frac{1}{2}V-\frac{b}{2}U  + [U,R] + [T,V]+(1 + \langle JU,V \rangle)Z + [T,R].
	\end{equation}
	Note that this vector is orthogonal to $B$. With this in mind, and focusing on the coefficient of $Z$, we can express \eqref{exp1} using the generators of~$\widetilde{\g{h}}$ (which form a basis, since $a=x=1$)~as 
	\begin{equation}\label{exp1_bis}
		-b(1 + \langle JU,V \rangle)(B+U+T)+(1 + \langle JU,V \rangle)(bB+V+Z+R)+\sum_{i=1}^{\dim\g{s}}c_i(W_i+\Phi(W_i))+S,
	\end{equation}
	for certain $c_i\in\mathbb{R}$ and $S\in\g{q}$. Taking inner product with $U$ in~\eqref{exp1} and~\eqref{exp1_bis}, equating both expressions and using $\langle U,V\rangle=-ab=-b$, we obtain
	\[
	-\frac{b}{2}-\frac{b}{2}|U|^2+\langle[T,V],U\rangle=-b(1 + \langle JU,V \rangle)(1+|U|^2),
	\]
	which yields the first formula. Similarly, taking inner product in~\eqref{exp1} and~\eqref{exp1_bis} with~$V$, we~get
	\[
	\frac{1}{2}|V|^2+\frac{b^2}{2}+\langle [U,R],V\rangle = (b^2+|V|^2)(1+\langle JU,V\rangle).\qedhere
	\]
\end{proof}

\begin{lemma}\label{lemma:structure_formulas}
	Assume $a=1$. Then we have:
	\[
	\sum_{i=1}^{d}\langle [\Phi(W_{i}),W_i],U\rangle=\begin{cases}
		-\frac{d}{2} & \text{if } R-bT=0,
		\\
		-\langle JU,V \rangle- \frac{2+d}{2} & \text{if } R-bT\neq 0 \text{ and } R-bT\notin\Phi(\g{s}_d),
		\\
		-\langle JU,V \rangle- \frac{1+d}{2} & \text{if } R-bT\neq 0 \text{ and } R-bT\in\Phi(\g{s}_d).
	\end{cases}
	\]
\end{lemma}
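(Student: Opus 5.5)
The plan is to extract the sum from a trace identity. Bracket $X_1=B+U+T$ with the basis elements $W_i+\Phi(W_i)$ of $\g{m}$, read off the resulting element of $\g{h}$ in two ways, and compute the trace of one induced endomorphism of $\g{s}_d$ twice. For $W\in\g{s}$, the relations \eqref{eq:brackets}, the fact that $\g{k}_0$ normalizes $\g{g}_\alpha$ and centralizes $\g{a}\oplus\g{g}_{2\alpha}$, and closedness of $\g{h}$ give
\[
[B+U+T,W+\Phi(W)]=\tfrac12 W+[T,W]-[\Phi(W),U]+\langle JU,W\rangle Z+[T,\Phi(W)]\in\g{h}.
\]
This element has no $B$-component and has $Z$-coefficient $\langle JU,W\rangle$. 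Using the generators of Remark~\ref{rem:generators} (assuming $x=1$; when $X_2=0$ the bracket gives $\langle JU,W\rangle=0$), it therefore equals
\[
\langle JU,W\rangle(X_2-bX_1)+\bigl(L(W)+\Phi(L(W))\bigr)+S_W
\]
for a unique linear map $L\colon\g{s}\to\g{s}$ and some $S_W\in\g{q}$.

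First, compare $\g{g}_\alpha$-components and take the inner product with $W$. Since $\ad T$ is skew on $\g{g}_\alpha$, the term $\langle[T,W],W\rangle$ vanishes. Invariance gives $\langle[\Phi(W),U],W\rangle=-\langle[\Phi(W),W],U\rangle$, and $U,V\perp\g{s}$ kills the $\langle V-bU,W\rangle$ term. Summing over the orthonormal basis $W_1,\dots,W_d$ of $\g{s}_d$, this yields
\[
\sum_{i=1}^d\langle L(W_i),W_i\rangle=\frac d2+\sum_{i=1}^d\langle[\Phi(W_i),W_i],U\rangle,
\]
where the left side is the trace of $L$ compressed to $\g{s}_d$.

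Second, compare $\g{k}_0$-components:
\[
[T,\Phi(W)]=\langle JU,W\rangle(R-bT)+\Phi(L(W))+S_W .
\]
Project $\g{k}_0$-orthogonally onto $\widetilde{\g{h}}_{\g{k}_0}$, which kills $S_W$. Transported through the isomorphism $\Phi|_{\g{s}_d}$, the trace of $L$ on $\g{s}_d$ then becomes the trace on $\Phi(\g{s}_d)$ of $\ad T$ (compressed) minus the rank-one map $Y\mapsto\langle JU,\Phi^{-1}Y\rangle\,(R-bT)$ (compressed). Here I must check that the $\g{m}_{nd}$-part of $L(W)$ contributes nothing, since $\Phi$ vanishes on $\g{m}_{nd}$. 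The compressed $\ad T$ has trace zero because $\ad T$ is $\mathcal{Q}_\theta$-skew. I would choose the complement of $\Phi(\g{s}_d)$ carefully, presumably $\mathcal{Q}_\theta$-orthogonal, so that compression preserves skewness.

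The rank-one term produces the trichotomy:
\begin{itemize}
\item If $R-bT=0$, the term vanishes, the trace of $L$ is $0$, and the sum equals $-d/2$.
\item If $R-bT\in\Phi(\g{s}_d)$, its trace is $\langle JU,\Phi^{-1}(R-bT)\rangle$.
\item If $R-bT\notin\Phi(\g{s}_d)$, only the component of $R-bT$ along $\Phi(\g{s}_d)$ enters.
\end{itemize}
The remaining input must then be an independent evaluation of these quantities. I would get it from the bracket $[X_1,X_2]$ (as in Lemma~\ref{lemma:case4_brackets}, but without assuming $a=x=1$ fully), whose $\g{k}_0$-component $[T,R]$ and $Z$-coefficient $1+\langle JU,V\rangle$ should yield $\langle JU,\Phi^{-1}(\cdot)\rangle$-type expressions equal to $\langle JU,V\rangle+1$, respectively $\langle JU,V\rangle+\tfrac12$. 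That would give the constants $-\langle JU,V\rangle-\frac{2+d}{2}$ and $-\langle JU,V\rangle-\frac{1+d}{2}$.

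I expect the main obstacle to be this last bookkeeping step. Because $\Phi$ is not an isometry and $X_1,X_2,\g{m}$ are not mutually $\mathcal{Q}_\theta$-orthogonal, I must choose the projection onto $\widetilde{\g{h}}_{\g{k}_0}$ and the complement of $\Phi(\g{s}_d)$ so that both the trace of the compressed $\ad T$ vanishes and the rank-one contribution is computable. I would first try enlarging $\Phi(\g{s}_d)$ by $\mathbb{R}(R-bT)$ when $R-bT\notin\Phi(\g{s}_d)$, and tracing $\ad T$ on that enlarged space. The extra basis direction should account for the jump from $1+d$ to $2+d$.
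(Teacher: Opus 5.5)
Your setup matches the paper's: the bracket $[X_1,W+\Phi(W)]$, the diagonal entries $b_{ii}=\frac12+\langle[\Phi(W_i),W_i],U\rangle$, the $\g{k}_0$-expansion $[T,\Phi(W)]=\langle JU,W\rangle(R-bT)+\Phi(L(W))+S_W$, and the vanishing trace of the $\mathcal{Q}_\theta$-orthogonal compression of $\ad T$. The first case is complete.

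For the second case, your fallback of tracing $\ad T$ on $\Phi(\g{s}_d)\oplus\mathbb{R}(R-bT)$ is exactly the paper's argument, and it is necessary rather than optional. Compressing only onto $\Phi(\g{s}_d)$ leaves the orthogonal component of $R-bT$ along $\Phi(\g{s}_d)$, which you cannot evaluate. The needed diagonal entry $1+\langle JU,V\rangle$ is read off from the $\g{k}_0$-component of $[X_1,X_2]$, using $[T,R]=[T,R-bT]$ and $\g{q}\perp\widetilde{\g{h}}_{\g{k}_0}$.

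The genuine gap is the third case. There the trace identity gives $\sum_{i\le d}b_{ii}=-\langle JU,W'\rangle$ with $W'=(\Phi|_{\g{s}_d})^{-1}(R-bT)$. All the content lies in proving $\langle JU,W'\rangle=\frac12+\langle JU,V\rangle$, which you only assert ``should'' follow from $[X_1,X_2]$. It does not follow from that bracket alone: its $\g{k}_0$-component $[T,R]=[T,\Phi(W')]$ merely relates the unknown coefficients $c_i$ to $L(W')$.

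The missing idea is to eliminate the $\g{k}_0$-part altogether. The element $X_3=X_2-(W'+\Phi(W'))=bB+V-W'+Z+bT\in\g{h}$ has $\g{k}_0$-component proportional to $T$. Hence $Y=[X_1,X_3]=\frac12(-bU+V-W')+[T,-bU+V-W']+rZ$, with $r=1+\langle JU,V-W'\rangle$, has no $\g{k}_0$-component at all. Expand $Y$ in the generators; the coefficients of $X_1$ and $X_2$ are forced to be $-br$ and $r$. Comparing $\g{k}_0$-components then gives $Y=r(-bU+V-W'+Z)+E$ with $E\in\ker\Phi\cap\g{s}=\g{m}_{nd}$. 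Now pair the $\g{g}_\alpha$-components with $-bU+V-W'$, which is nonzero because $W'\neq0$ and $V-bU\perp\g{s}$. Since $\ad T$ is skew on $\g{g}_\alpha$ and $\g{m}_{nd}\perp U,V,W'$, this yields $r=\frac12$, which is the required identity. Without this step the third formula remains unproved.
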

\begin{proof}
	 For each $i=1,\dots,d$, we have
	\begin{equation*}
		[B+U+T,W_i+\Phi(W_i)]=\frac{1}{2}W_i+[U,\Phi(W_i)]+[T,W_i]+\langle JU,W_i \rangle Z+[T,\Phi(W_i)] \in \g{h}.
	\end{equation*}
	If $x=1$, expressing this vector as a linear combination of the generators of $\widetilde{\g{h}}$ yields
	\begin{equation*}
		-b\langle JU,W_i \rangle(B+U+T)+\langle JU,W_i \rangle (bB+V+Z+R) +\sum_{i=1}^{\dim\g{s}}b_{ij}(W_j+\Phi(W_j))+S_i,
	\end{equation*}
	for some $S_i\in\g{q}$, whereas if $x=0$ (and hence $X_2=0$), the expression reduces to the last two addends lying in $\g{m}\oplus\g{q}$. Taking inner product with the unit vector $W_i$ in the previous expressions, we get
	\begin{equation}\label{eq:b_ii}
		b_{ii} = \frac{1}{2} + \langle [U, \Phi(W_i)], W_i \rangle,\qquad \text{for each }i=1,\dots, d.
	\end{equation}
	Similarly, considering the $\g{k}_0$-component of both expressions, we deduce
	\begin{equation}\label{eq:Tphi_expansion}
		[T,\Phi(W_{i})] = \langle JU, W_i \rangle (R-bT) + \sum_{j=1}^{d}b_{ij}\Phi(W_{j}) + S_i,\qquad \text{for each }i=1,\dots, d.
	\end{equation}
	Note that this holds for both $x=0$ (since  this implies $b=0$ and $R=0$) and $x=1$.
	
	Now, let us define the linear endomorphism of $\Phi(\g{s}_d)+ \mathbb{R}(R-bT)\subset\g{k}_0$ given by
	\[
	\psi ( P )= [T,P]_{\Phi(\g{s}_d)+ \mathbb{R}(R-bT)}.
	\]
	Note that $\psi$ is skew-adjoint with respect to the Killing form of $\g{g}$ (since $T \in \g{k}_0$), and thus $\tr\psi=0$.
	Let us proceed by considering cases based on whether $R-bT$ vanishes or is linearly independent of the set $\{\Phi(W_{i})\}_{i=1}^{d}$.
	
	If $R-bT=0$, then $\{\Phi(W_i)\}_{i=1}^d$ is a basis of $\Phi(\g{s}_d)$, and hence from~\eqref{eq:Tphi_expansion} and~\eqref{eq:b_ii} we get
	\[
	0 = \tr\psi = \sum_{i=1}^{d}b_{ii} = \frac{d}{2} + \sum_{i=1}^{d}\langle [U, \Phi(W_i)], W_i \rangle,
	\]
	which proves the first case in the statement. This includes the subcase $x=0$ (and hence $X_2=0$), so we will have $x=1$ and $X_2\neq 0$ from now on.
	
	Next, suppose that $R-bT$ is linearly independent of $\{\Phi(W_{i})\}_{i=1}^{d}$. Hence, $\{\Phi(W_i)\}_{i=1}^d\cup\{R-bT\}$ is a basis of $\Phi(\g{s}_d)+\mathbb{R}(R-bT)$. 
	Since $a=x=1$, projecting equations \eqref{exp1} and~\eqref{exp1_bis} onto $\g{k}_0$ and equating, yields
	\begin{equation*}
		[T, R] = ( 1 + \langle JU, V \rangle)(R-bT) +\sum_{i=1}^{d}c_i\Phi(W_{i}) + S,
	\end{equation*}
	which along with $[T,R-bT]=[T,R]$, \eqref{eq:Tphi_expansion} and~\eqref{eq:b_ii} implies the second case in the statement:
	\[
	0 = \tr\psi =  1 + \langle JU, V \rangle + \sum_{i=1}^{d}b_{ii} = 1 + \langle JU, V \rangle + \frac{d}{2} + \sum_{i=1}^{d} \langle [U, \Phi(W_i)], W_i \rangle.
	\]
	
	Finally, let us consider the case where $R-bT \neq 0$ is linearly dependent of $\{\Phi(W_{i})\}_{i=1}^{d}$. Then we can write $R-bT = \sum_{i=1}^{d}r_{i}\Phi(W_{i})$, so that $W' = \sum_{i=1}^{d}r_{i}W_{i} \in \g{s}_d$ satisfies $W'+R-bT \in\g{m}\subset \g{h}$. Then, the trace condition together with \eqref{eq:Tphi_expansion} and~\eqref{eq:b_ii} yields
	\[
	0 = \tr\psi = \langle JU, W' \rangle + \frac{d}{2} + \sum_{i=1}^{d}\langle [U, \Phi(W_i)], W_i \rangle.
	\]
	To establish the third case in the statement, it remains to show that $\langle JU, W' \rangle = \frac{1}{2} + \langle JU, V \rangle$. Recall that $a=x=1$, and hence $X_1=B+U+T\in\g{h}$ and $X_2=bB+V+Z+R\in\g{h}$. First observe that $X_3=X_2-(W'+R-bT)=bB+ V-W'+ Z+bT\in\g{h}$, and thus
	\begin{equation}\label{eq:Y}
		Y=[X_1,X_3] = \frac{1}{2}(-bU+V-W')  + [T, -bU+V-W']+ (1 + \langle JU, V-W' \rangle) Z  \in \g{h}.
	\end{equation}
	Using the generating set of $\widetilde{\g{h}}$, the coefficients of the basis elements $X_1=B+U+T$ and $X_2=bB+V+Z+R$ are completely determined, and we can express $Y$ in the form
	\begin{equation}\label{eq:Ybis}
		Y=-b(1+\langle JU,V-W'\rangle)X_1+(1+\langle JU,V-W'\rangle)X_2+ \widetilde{W}+\Phi(\widetilde{W})+\tilde{S},
	\end{equation}
	for certain $\widetilde{W}\in\g{s}$ and $\widetilde{S}\in\g{q}$.
	Note that it follows from~\eqref{eq:Y} that  $Y=Y_\g{n}$, and hence $Y \in \ker \Phi$. Then, projecting \eqref{eq:Ybis} onto $\g{n}$ and observing  that $-bX_1+X_2=-bU+V+Z+R-bT$, we deduce $Y=Y_\g{n} \in \ker \Phi \cap (\g{s} \oplus \mathbb{R}(-bU+V+Z))$. 
	Actually, $Y=r(-bU+V+Z)+\widetilde{W}$, with $r=1+\langle JU,V-W'\rangle$.
	This implies that
	\[
	0=\Phi(Y)=r\Phi(-bU+V+Z)+\Phi(\widetilde{W})=r(R-bT)+\Phi(\widetilde{W}),
	\]
	and then we have $\Phi(\widetilde{W})=-r(R-bT)$. Thus, we can write
	\begin{equation}\label{eq:Y_simple}
		Y=r(-bU+V-W'+Z)+rW'+\widetilde{W},
	\end{equation}
	where we have $rW'+\widetilde{W} \in (\ker \Phi)\cap \g{s}$, since $\Phi(rW'+\widetilde{W})=r(R-bT)-r(R-bT)=0$. 
	Projecting the expressions for $Y$ in~\eqref{eq:Y} and~\eqref{eq:Y_simple} onto $\g{g}_{\alpha}$, we obtain:
	\begin{equation}\label{eq:Y_galpha}
		Y_{\g{g}_{\alpha}} =\frac{1}{2}(-bU+V-W') + [T, -bU+V-W']= r(-bU+V-W') + rW'+\widetilde{W}.
	\end{equation}
	Note that $\langle rW'+\widetilde{W},-bU+V-W'\rangle=0$, because $rW'+\widetilde{W}\in(\ker\Phi)\cap\g{s}$, the vectors $U$, $V$  are orthogonal to $\g{s}$, and $W'\in\g{s}_d$ is orthogonal to $\ker\Phi$ by definition. Since we also have $\langle[T, -bU+V-W'] , -bU+V-W'\rangle=0$, from~\eqref{eq:Y_galpha} we deduce that $r=1/2$. Since $r=1+\langle JU,V-W'\rangle$, we conclude that 
	$\langle JU, W' \rangle=\frac{1}{2} + \langle JU, V \rangle$. This completes the proof of the third case in the statement.
\end{proof}

The final ingredient for the proof of our main result, Theorem~\ref{Thm1}, is the following simple but more general lemma.

\begin{lemma}\label{lemasandwich}
	Let $\g{h}'$ be a Lie subalgebra of $\g{a} \oplus \g{n}$, and $\g{q} \subset \g{k}_{0}$ such that $\widehat{\g{h}}=\g{q} \oplus \g{h}'$ is a Lie subalgebra of $\g{k}_{0}\oplus \g{a}\oplus \g{n}$. Let $\g{h}\subset \widehat{\g{h}}$ be a Lie subalgebra of $\g{k}_{0}\oplus \g{a}\oplus \g{n}$ with $\g{h}_{\g{a}\oplus\g{n}}=\g{h}'$. Then $H'\cdot o = H\cdot o$, where $H$ and $H'$ are the connected subgroups of $G$ with Lie algebras $\g{h}$ ~and~$\g{h}'$.
\end{lemma}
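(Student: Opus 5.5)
The idea is to compare all three orbits with the orbit of the larger group $\widehat{H}$, the connected subgroup of $G$ with Lie algebra $\widehat{\g{h}}=\g{q}\oplus\g{h}'$. Since $\g{h}\subset\widehat{\g{h}}$ and $\g{h}'\subset\widehat{\g{h}}$, both $H$ and $H'$ are subgroups of $\widehat{H}$. Hence $H\cdot o\subset \widehat{H}\cdot o$ and $H'\cdot o\subset\widehat{H}\cdot o$. It therefore suffices to show that each of $H\cdot o$ and $H'\cdot o$ is open and closed in $\widehat{H}\cdot o$. Then connectedness of $\widehat{H}\cdot o$ forces equality, so $H\cdot o=\widehat{H}\cdot o=H'\cdot o$.

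First I compute the tangent spaces at $o$. Since $\g{q}\subset\g{k}_0\subset\g{k}$, the projection of $\widehat{\g{h}}$ onto $\g{p}$ (equivalently, its component in $\g{a}\oplus\g{n}$ under $\Psi$) equals that of $\g{h}'$. The hypothesis $\g{h}_{\g{a}\oplus\g{n}}=\g{h}'$ says the same for $\g{h}$. So the three orbits have the same tangent space at $o$, namely the space identified with $\g{h}'$. By homogeneity, each of $H\cdot o$ and $H'\cdot o$ has constant dimension $\dim\g{h}'$, equal to $\dim\widehat{H}\cdot o$.

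The cleanest route for openness is to work at the group level. The orbit map $\widehat{H}\to\widehat{H}\cdot o$ is a submersion. For a subgroup $L\in\{H,H'\}$ of $\widehat{H}$ whose Lie algebra satisfies $\g{l}+\widehat{\g{h}}_o=\widehat{\g{h}}$, where $\widehat{\g{h}}_o=\widehat{\g{h}}\cap\g{k}$ is the isotropy algebra, the orbit map restricted to $L$ has surjective differential at the identity. By equivariance it is then a submersion everywhere, so $L\cdot o$ is open in $\widehat{H}\cdot o$. For $L=H'$ the condition is immediate, since $\widehat{\g{h}}=\g{q}\oplus\g{h}'$ and $\g{q}\subset\widehat{\g{h}}_o$. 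For $L=H$, every $X'\in\g{h}'$ has some $X\in\g{h}$ with the same $\g{a}\oplus\g{n}$-component, so $X'-X\in\widehat{\g{h}}\cap\g{k}_0$ and the condition holds. Here I use that $\widehat{\g{h}}\subset\g{k}_0\oplus\g{a}\oplus\g{n}$, which makes the $\g{k}$-part of any element lie in $\g{k}_0$.

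Closedness is the step I expect to be the main obstacle, since orbits of arbitrary subgroups need not be closed. I will deduce it from openness: the $L$-orbits partition $\widehat{H}\cdot o$, because $\widehat{H}$ permutes them and each $\widehat{H}$-translate of $L\cdot o$ is an orbit of a conjugate of $L$, which satisfies the same Lie algebra condition. The complement of $L\cdot o$ is a union of open sets, so $L\cdot o$ is closed. To make this rigorous, I will check that the submersion argument works at every point $h\cdot o$ of $\widehat{H}\cdot o$, using $\mathrm{Ad}(h)$-invariance of the spanning condition. A simpler alternative, which I would try first, is the following. $H'\subset AN$ acts freely, so $H'\cdot o$ is the closed image of the closed connected subgroup $H'$ of the simply connected solvable group $AN$ under the diffeomorphism $AN\to\mathbb{C}H^n$. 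It is therefore a properly embedded submanifold, and hence closed in $\widehat{H}\cdot o$. Then $\widehat{H}\cdot o=H'\cdot o$ by connectedness. Since $H\cdot o$ is open in this connected orbit and every $H'\cdot o$-point lies in a single $H$-orbit by the partition argument, the equality $H\cdot o=H'\cdot o$ follows.
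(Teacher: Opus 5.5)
Your overall frame is the paper's: compare both orbits with $\widehat{H}\cdot o$, show each is open and closed there, and conclude by connectedness. Your openness argument via $\g{l}+\widehat{\g{h}}_o=\widehat{\g{h}}$ is fine. So is your treatment of $H'$. In fact it is even simpler than you make it: $[\g{q},\g{h}']\subset\widehat{\g{h}}\cap(\g{a}\oplus\g{n})=\g{h}'$, so $\g{h}'$ is an ideal of $\widehat{\g{h}}$ and $H'$ is normal in $\widehat{H}$. Hence $\widehat{H}=H'Q$ and $\widehat{H}\cdot o=H'\cdot o$ directly.

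The gap is the closedness of $H\cdot o$. Your partition argument needs every $H$-orbit in $\widehat{H}\cdot o$ to be open, i.e.\ $\mathrm{Ad}(h^{-1})\g{h}+\widehat{\g{h}}_o=\widehat{\g{h}}$ for all $h\in\widehat{H}$. This does not follow from the condition at $o$:
\begin{itemize}
\item $\widehat{H}$ permutes the $H$-orbits only if it normalizes $H$, which it need not do. The set $h(H\cdot o)=(hHh^{-1})\cdot(h\cdot o)$ is an orbit of a different group.
\item The spanning condition is not $\mathrm{Ad}$-invariant in general. The unipotent subgroup of $\mathrm{PSL}(2,\mathbb{R})$ acting on $\mathbb{R}P^1$ satisfies it at $0$ (open orbit $\mathbb{R}$) but not at its fixed point $\infty$.
\end{itemize}
In the present setting the condition does hold everywhere, but only as a consequence of the lemma itself; you give no independent reason for it. Your closing sentence, that every point of $H'\cdot o$ lies in a single $H$-orbit, is a tautology. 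It does not give $H\cdot o=H'\cdot o$, since an open orbit in a connected space need not be the whole space.

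The missing input is that $H$ acts by isometries, and this is exactly what the paper uses. With the induced metric, $H\cdot o$ is a homogeneous, hence complete, Riemannian manifold. It is also an open subset of the connected Riemannian manifold $\widehat{H}\cdot o$, which carries the same induced metric. Such a set is closed. Indeed, let $p$ be in its closure and take $q\in H\cdot o$ close to $p$, joined to $p$ by a short geodesic of $\widehat{H}\cdot o$. By completeness, the geodesic of $H\cdot o$ with the same initial data exists for all time and coincides with it, so $p\in H\cdot o$. This argument handles $H$ and $H'$ at once, so you do not need the closed-subgroup theorem for simply connected solvable groups.
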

\begin{proof}
	Note that $\g{h}$ and $\g{h}'$ are Lie subalgebras of $\widehat{\g{h}}$. Moreover, $\g{h}_{\g{a}\oplus\g{n}}=\g{h}'=\widehat{\g{h}}_{\g{a}\oplus\g{n}}$, so all three Lie algebras have the same $(\g{a}\oplus\g{n})$-projection. Hence, the connected Lie subgroups $H$, $H'$ and $\widehat{H}$ of $K_0AN\subset G$ with respective Lie algebras $\g{h}$, $\g{h}'$ and $\widehat{\g{h}}$ satisfy $T_o(H\cdot o)=T_o(H'\cdot o)=T_o(\widehat{H}\cdot o)$. Therefore, the dimensions of the three orbits agree, and we deduce that $H\cdot o$ and $H'\cdot o$ are open submanifolds of $\widehat{H}\cdot o$. Since $H\cdot o$ and $H'\cdot o$ are complete subsets of $\widehat{H}\cdot o$, they are closed in $\widehat{H}\cdot o$. By connectedness, all three orbits agree.
\end{proof}

\begin{proof}[Proof of Theorem~\ref{Thm1}]
Let $S$ be a minimal homogeneous submanifold of $\mathbb{C} H^n$. By Proposition~\ref{prop:reductive}, $S$ is totally geodesic or $S$ is the orbit of a connected Lie subgroup of a parabolic subgroup of $G$. Note that totally geodesic submanifolds of $\mathbb{C} H^n$ are either totally real $\mathbb{R} H^k$ or complex $\mathbb{C} H^k$, $k\in\{0,\dots, n\}$. A totally geodesic $\mathbb{R} H^k$ of positive dimension (i.e., $k\geq 1$) is precisely the orbit $H\cdot o$ of the connected subgroup $H$ of $G=\mathrm{SU}(1,n)$ with Lie algebra $\g{h}=\g{a}\oplus\g{m}$, where $\g{m}$ is a $(k-1)$-dimensional totally real subspace of $\g{g}_\alpha\cong\mathbb{C}^{n-1}$, which corresponds to Theorem~\ref{Thm1}(a). A totally geodesic $\mathbb{C} H^k$ of positive dimension ($k\geq 1$) coincides with $H\cdot o$, where $H$ has Lie algebra $\g{a}\oplus\g{m}\oplus\g{g}_{2\alpha}$ and $\g{m}\subset\g{g}_\alpha\cong\mathbb{C}^{n-1}$ is a  complex subspace of complex dimension $k-1$, which fits into Theorem~\ref{Thm1}(b). Therefore, by Proposition~\ref{prop:reductive} and Lemma~\ref{lemma:reduction_K0AN}, if $S$ is not totally geodesic, then $S$ is congruent to the orbit through the base point $o\in \mathbb{C} H^n$ of a connected subgroup $H$ of the parabolic subgroup $K_0AN$.

We proceed to compute the mean curvature $\mathcal{H}$ of $H\cdot o$. We assume the notations established above in this section regarding the decomposition of the Lie algebra $\g{h}$ of $H$ (in particular, Proposition~\ref{prop:decomposition} and Remarks~\ref{rem:decomposition} and~\ref{rem:generators}). Recall that the $(\g{a}\oplus\g{n})$-projections of the vectors in the generating set for $\widetilde{\g{h}}$ yield an orthogonal generating set for $\g{h}_{\g{a}\oplus\g{n}}$.
Using~\eqref{eq:SFF_H}, we have:
\begin{align}\label{eq:II_generators}\nonumber
		\mathrm{II}(B+U,B+U) &= \Bigl(\frac{|U|^2}{2}B - \frac{1}{2}U + [T,U]\Bigr)^{\perp},  \hspace{8,6em}\text{if }a=1,
		\\
		\mathrm{II}(W_i,W_i) &= \Bigl(\frac{1}{2}B+[\Phi(W_i),W_i]\Bigr)^\perp,\hspace{6.7em} i=1,\dots, \dim\g{s},
		\\ \nonumber
		\mathrm{II}(bB+V+Z,bB+V+Z) &= \Bigl(\Bigl(\frac{|V|^2}{2}+1\Bigr)B -\frac{b}{2}V - JV + [R,V]-bZ \Bigr)^{\perp}, \text{ if } x=1.
\end{align}

It will be convenient to distinguish three different cases depending on $a$ and $x$.

\smallskip
\textit{Case $a=0$.} 
Hence, $X_1=aB+U+T=0$ and $\g{h}=\g{q}\oplus\mathbb{R} X_2\oplus\g{m}$. Here, either  $X_2=bB+V+xZ+R=0$ (when $x=0$) or $X_2=bB+V+Z+R$ (when $x=1$). In any case, $B-bZ\in\g{h}_{\g{a}\oplus\g{n}}^\perp$ is a nonzero normal vector to $H\cdot o$, so by~\eqref{eq:II_generators} the mean curvature of $H\cdot o$ satisfies
\[
\langle \mathcal{H},B-bZ\rangle =\frac{\dim\g{s}}{2}+ \frac{x}{|bB+V+Z|^2}\left(\frac{|V|^2}{2} +  1+b^2\right).
\]
This is always positive unless $\g{s}=0$ and $x=0$, which implies that $H\cdot o$ cannot be minimal if it has positive dimension.

\smallskip
\textit{Case $a=1$ and $x=0$.} 
Then $\g{h}=\g{q}\oplus\mathbb{R} X_1\oplus\g{m}$. By~\eqref{eq:II_generators}, the mean curvature of $H\cdot o$ is 
\begin{gather}\label{eq:H_case_2}
	\mathcal{H}=\frac{1}{1+|U|^2}\mathrm{II}(B+U,B+U)+\sum_{i=1}^{\dim\g{s}} \mathrm{II}(W_{i},W_{i})\\ \nonumber
	= \left(\left(\frac{|U|^2}{2(1+|U|^2)} + \frac{\dim\g{s}}{2}\right)B -\frac{1}{2(1+|U|^2)}U+\frac{1}{1+|U|^2}[T,U]+\sum_{i=1}^{d}[\Phi(W_{i}),W_{i}]\right)^{\perp},
\end{gather}
where we recall that $d=\dim\g{s}_d$ and $\Phi(W_i)=0$ for $i>d$. 
Now assume $U \neq 0$ for the sake of contradiction. Then $|U|^{2}B - U \in \g{h}_{\g{a}\oplus \g{n}}^{\perp}$ is a nonzero normal vector. Since $\langle [T,U],U\rangle=0$ and using Lemma~\ref{lemma:structure_formulas} for $R-bT=0$ (since $x=0$), we can calculate
\begin{equation}\label{eq:Hxi_case2}
	\langle \mathcal{H}, |U|^{2}B - U  \rangle = 
	|U|^2 \frac{ \dim\g{s}+1}{2}-\sum_{i=1}^{d}\langle [\Phi(W_{i}),W_{i}], U\rangle=|U|^2 \frac{ \dim\g{s}+1}{2}+\frac{d}{2},
\end{equation}
which does not vanish if $U\neq 0$. Hence $U=0$. But then~\eqref{eq:Hxi_case2} or Lemma~\ref{lemma:structure_formulas} imply $d=0$. Therefore, minimality requires $U=0$ and $\g{s}_d=0$.

Then  $\g{h}=\g{q}\oplus\mathbb{R}(B+T)\oplus \g{m}$, with $\g{m}\subset \g{g}_{\alpha}$. Note that $[W,W']=\langle JW,W'\rangle Z\in\g{h}$, for any $W,W'\in\g{m}\subset\g{h}$, which forces $\g{m}$ to be a totally real subspace of $\g{g}_\alpha$, because $Z\notin\g{h}$. Since $[B+T,W]=W/2+[T,W]\in\g{h}$ for any $W\in\g{m}$, we get $[T, \g{m}] \subset \g{h}\cap\g{g}_\alpha=\g{m}$. Note that $[T,\g{q}]=[B+T,\g{q}]\in\g{k}_0\cap\g{h}=\g{q}$, and $[\g{q},\g{m}]\subset\g{m}$.  This ensures that the subspace $\widehat{\g{h}} = \g{q} \oplus \mathbb{R}T\oplus\mathbb{R}B \oplus \g{m}$ is a Lie subalgebra of $\g{k}_0 \oplus \g{a} \oplus \g{n}$.  Lemma~\ref{lemasandwich} allows us to conclude that the orbit $H \cdot o$ coincides with the orbit $H' \cdot o$, where $\g{h}' = \mathbb{R}B \oplus \g{m} \subset \g{a} \oplus \g{n}$.

Finally, as we already mentioned,  the orbit $H\cdot o=H'\cdot o$ turns out to be a totally geodesic $\mathbb{R} H^k$. Indeed, working with $\g{h}'=\g{a}\oplus\g{m}$ instead of $\g{h}$, we get from~\eqref{eq:SFF_H} and Proposition~\ref{prop:Solonenko_an} that $\mathrm{II}(B,B) = \mathrm{II}(W,W) =\mathrm{II}(B,W)= 0$, for all $W \in \g{m}$.  
Consequently, the orbit is totally geodesic. Since $\g{h}'$ is a totally real subspace of $\g{a}\oplus\g{n}$, it is actually a totally geodesic real hyperbolic space $\mathbb{R}H^k$, $k=\dim\g{m}+1$. This corresponds to item (a) in Theorem~\ref{Thm1}.

\smallskip
\textit{Case $a=x=1$.} 
Now, we have $\g{h}=\g{q}\oplus\mathbb{R} X_1\oplus\mathbb{R} X_2\oplus\g{m}$, where $X_1=B+U+T$ and $X_2=bB+V+Z+R$ are linearly independent, and  $\langle U,V\rangle =-b$. Consider the nonzero normal vector $\xi=|U|^2B-U-b(1+|U|^2)Z\in\g{h}_{\g{a}\oplus\g{n}}^\perp$. Then, taking inner products with $\xi$ in \eqref{eq:II_generators}, normalizing appropriately, and adding up, we get that $\mathcal{H}$ satisfies:
\begin{equation}\label{eq:Hxi_case4}
	\begin{split}
		\langle\mathcal{H},\xi\rangle={}& \frac{2+\dim\g{s}}{2}|U|^2-\sum _{i=1}^d \langle [\Phi(W_i),W_i],U \rangle
		\\&
		+\frac{b^2+|U|^2(1+b^2)-2\langle JU,V\rangle-2\langle [R,V],U\rangle }{2(b^2+|V|^2+1)}.
	\end{split}
\end{equation}
Let us distinguish two subcases depending on whether $R-bT$ vanishes or not.

First, assume that $R-bT=0$. Then, by Lemma~\ref{lemma:case4_brackets}, 
\[
0=\langle [R-bT,U],V\rangle= -\langle[U,R],V\rangle+b\langle [T,V],U\rangle=-\frac{1}{2}(2b^2+b^2|U|^2+|V|^2)(1+2\langle JU,V\rangle),
\] 
and thus either $V=0$ and $b=0$, or $\langle JU,V\rangle=-1/2$. If $\langle JU,V\rangle=-1/2$, again Lemma~\ref{lemma:case4_brackets} implies $\langle[R,V],U\rangle=0$. But then the last addend in~\eqref{eq:Hxi_case4} is strictly positive (with independence of the values of $U$, $V$ and $R$). Then, by Lemma~\ref{lemma:structure_formulas}, the whole expression for $\langle \mathcal{H},\xi\rangle$ in~\eqref{eq:Hxi_case4} is strictly positive, which contradicts the minimality of $H\cdot o$. Hence, we have $V=0$ (and thus $b=0$) and, again by~\eqref{eq:Hxi_case4} and Lemma~\ref{lemma:structure_formulas}, we get $U=0$ and $d=0$. Therefore, we have shown that minimality in this subcase implies $U=V=R=0$, $b=0$ and $\g{s}_d=0$.

Now assume that $R-bT\neq 0$. Using Lemmas~\ref{lemma:case4_brackets} and~\ref{lemma:structure_formulas} in~\eqref{eq:Hxi_case4}, and putting $k=1/2$ if $R-bT\in\Phi(\g{s}_d)$ and $k=1$ otherwise, we get
\begin{align*}
	\langle\mathcal{H},\xi\rangle &= 
	\frac{2+\dim\g{s}}{2}|U|^2+\langle JU,V\rangle +\frac{d}{2}+k
	\\&\quad\,+\frac{b^2+|U|^2(1+b^2)-2\langle JU,V\rangle-(b^2+|V|^2)(1+2\langle JU,V\rangle) }{2(b^2+|V|^2+1)}
	\\
	&\geq 
	\frac{2+\dim\g{s}}{2}|U|^2+ \frac{d}{2}+\frac{1}{2}
	+\frac{|U|^2(1+b^2)-|V|^2 }{2(b^2+|V|^2+1)}
	\\
	&=
	\frac{2+\dim\g{s}}{2}|U|^2+ \frac{d}{2}
	+\frac{|U|^2(1+b^2) +b^2+1}{2(b^2+|V|^2+1)}.
\end{align*}
This expression never vanishes, which contradicts the minimality of $H\cdot o$, so the subcase $R-bT\neq 0$ is impossible.

Therefore, we have shown that  $U=V=R=0$, $b=0$ and $\g{s}_d=0$. In other words, $\g{h}=\g{q}\oplus\mathbb{R}(B+T)\oplus \g{m}\oplus \mathbb{R}Z$ with $\g{m}\subset \g{g}_{\alpha}$. Since $[B+T,W]=W/2+[T,W]\in\g{h}$ for each $W\in\g{m}$, we see that $T$ normalizes $\g{m}$. Similarly, $[\g{q},\g{m}]\subset\g{m}$ and $[\g{q},T]\subset\g{q}$. This implies that  $\widehat{\g{h}}=\g{q}\oplus \mathbb{R}T\oplus\mathbb{R}B \oplus \g{m}\oplus \mathbb{R}Z $ is a Lie subalgebra of $\g{k}_{0}\oplus \g{a}\oplus \g{n}$. Thus, by Lemma~\ref{lemasandwich}, the orbit $H\cdot o$ coincides with $H'\cdot o$, where $\g{h}'=\g{a} \oplus \g{m}\oplus \g{g}_{2\alpha}$. 

In this case, it is known, and can easily be proved using  Proposition~\ref{prop:Solonenko_an}, that $H\cdot o=H'\cdot o$  is always a minimal submanifold of $\mathbb{C} H^n$, and it is totally geodesic (actually, a totally geodesic $\mathbb{C} H^{k}$ with $k=\dim\g{m}+1$) if and only if $\g{m}$ is a complex subspace of $\g{g}_\alpha\cong\mathbb{C}^{n-1}$; see~\cite[pp.~1039 and 1041]{DRDV:mathz} or~\cite[pp.~757-758]{DRDVSL:adv}. All in all, we have shown that the assumption $a=x=1$ leads us to Theorem~\ref{Thm1}~(b). This concludes the proof of Theorem~\ref{Thm1}.
\end{proof}

Before turning to the proofs of Theorems~\ref{th:isop} and~\ref{th:short}, which relate minimal homogeneous submanifolds to isoparametric families of hypersurfaces, we briefly recall some known facts about the latter.

\begin{remark}\label{rem:isop}
	Isoparametric families of hypersurfaces of $\mathbb{C} H^n$ were classified in~\cite{DRDVSL:adv}.~Each such family has at most one focal leaf. The only families without a focal leaf are, up to congruence, the horosphere foliation (given by the $N$-action on $\mathbb{C} H^n$) and the so-called solvable foliation, given by the action of $H$ as in Theorem~\ref{Thm1}(b) with $\g{m}$ a hyperplane of $\g{g}_\alpha$. All leaves of a horosphere foliation are mutually congruent, and none of them is minimal, whereas the solvable foliation has exactly one minimal leaf called a Lohnherr hypersurface or a fan~\cite{LR}. The families with a focal submanifold are given by tubes around such submanifold, which can be a totally geodesic $\mathbb{R} H^n$ or $\mathbb{C} H^k$, $k\in\{0,\dots,n-1\}$, or the orbit $H\cdot o$ as in~Theorem~\ref{Thm1}(b) for $\dim\g{m}< 2n-3$. Such focal submanifolds are homogeneous, but the associated isoparametric hypersurfaces are inhomogeneous (with nonconstant principal curvatures) except if the normal space to the focal set has constant K\"ahler angle. The focal sets are always minimal~\cite[Theorem~1.1]{GT}, but none of the tubes around them is minimal, as follows from  the expression for their mean curvature in~\cite[p.~1041]{DRDV:mathz}.
\end{remark}

\begin{proof}[Proof of Theorem~\ref{th:isop}]
	Let $S\subsetneq\mathbb{C} H^n$ be a minimal homogeneous submanifold. If $\dim S=0$, then tubes around the point $S$ define an isoparametric family of hypersurfaces~\cite[Main Theorem~(i)]{DRDVSL:adv}, which fits into Theorem~\ref{th:isop}(3). If $\dim S>0$, by Theorem~\ref{Thm1}, $S$ is congruent to the orbit $H\cdot o$ of the connected subgroup $H$ of $G=\mathrm{SU}(1,n)$ with Lie algebra $\g{h}=\g{a}\oplus\g{m}$ (with $\g{m}$ totally real in $\g{g}_\alpha$) or  $\g{h}=\g{a}\oplus\g{m}\oplus\g{g}_{2\alpha}$ (with $\g{m}$ a proper subspace of $\g{g}_\alpha$). 
	
	In the first case, $H\cdot o$ is a totally geodesic $\mathbb{R} H^k$, with $k=1+\dim\g{m}$, as argued above. This corresponds to item Theorem~\ref{th:isop}(1) if $1\leq k\leq n-1$. If $k=n$, we recover a totally geodesic $\mathbb{R} H^n$, which is known to be a focal submanifold of a (homogeneous) isoparametric family of hypersurfaces~\cite[Main Theorem~(ii)]{DRDVSL:adv}. 
	
	In the second case, it is known that $H\cdot o$ is the only minimal orbit of the $H$-action on $\mathbb{C} H^n$~\cite[p.~1041]{DRDV:mathz}. We can distinguish two cases (recall that $\g{g}_\alpha\cong\mathbb{C}^{n-1}$):
	\begin{enumerate}[(i)]
		\item $\dim\g{m}=2n-3$. Then $H\cdot o$ is a homogeneous minimal Lohnherr hypersurface. Two choices of a hyperplane $\g{m}$ in $\g{g}_\alpha$ yield congruent hypersurfaces. The $H$-action on $\mathbb{C} H^n$ defines a regular isoparametric foliation with exactly one minimal leaf (namely $H\cdot o$). This case corresponds to Theorem~\ref{th:isop}(2).
		\item $\dim\g{m}<2n-3$. Then the tubes around $H\cdot o$ determine an isoparametric family of hypersurfaces of $\mathbb{C} H^n$ with focal submanifold $H\cdot o$, as shown in~\cite{DRDV:mathz}. This corresponds to Theorem~\ref{th:isop}(3).
	\end{enumerate}
	Let us prove the converse. Let $S$ be a homogeneous submanifold of $\mathbb{C} H^n$. Clearly, if $S$ is a totally geodesic $\mathbb{R} H^k$ or a  Lohnherr hypersurface, then it is minimal. Suppose $S$ is the focal submanifold of an isoparametric family of hypersurfaces of $\mathbb{C} H^n$. By a general result of isoparametric families of hypersurfaces~\cite[Theorem~1.1]{GT}, $S$ is minimal. 
	\end{proof}
	
	\begin{proof}[Proof of Theorem~\ref{th:short}]
	Let $S$ be a homogeneous submanifold of $\mathbb{C} H^n$. Assume it is minimal. Then it is either the whole ambient space $\mathbb{C} H^n$, or, by Theorem~\ref{th:isop}, a totally geodesic $\mathbb{R} H^k$, $k\in\{0,\dots, n-1\}$, or a Lohnherr hypersurface (which is a minimal leaf of an isoparametric family of hypersurfaces of $\mathbb{C} H^n$), or a focal submanifold of an isoparametric family of hypersurfaces of $\mathbb{C} H^n$ (which is always minimal~\cite{GT}). This proves the result,  since the other implication of the statement holds trivially.
\end{proof}

\end{document}